\newcommand{\Pp}{\mathbb{P}}
\newcommand{\Ii}{\mathbb{I}}
\newcommand{\Ee}{\mathbb{E}}
\newcommand{\Ww}{\mathcal{W}}
\newcommand{\Ff}{\mathcal{F}}
\newcommand{\Oo}{\mathcal{O}}
\newcommand{\Xx}{\mathcal{X}}
\newcommand{\Aa}{\mathcal{A}}
\newtheorem{guess}{Conjecture}[section]
\newtheorem{definition}[guess]{Definition}
\newtheorem{theory}[guess]{Theorem}
\newtheorem{lemma}[guess]{Lemma}
\newtheorem{corol}[guess]{Corollary}
\begin{document}
\title{Voting Power : A Generalised Framework}
\author{Sreejith Das and Iead Rezek$^{*}$\\$^{*}$Department of Engineering Science, University of Oxford, U.K. }
\maketitle 

\begin{abstract}
This paper examines an area of Game Theory called Voting Power Theory.  With the adoption of a measure theoretic framework it argues that the many different indices and tools currently used for measuring voting power can be replaced by just three simple probabilities.  The framework is sufficiently general to be applicable to every conceivable type of voting game, and every possible decision rule. 
\end{abstract}


\section{Introduction}
\label{sect:introduction}

We are all familiar with the idea of voting.  It affects every part of our lives, from village committees deciding trivial matters, to the boardroom opting for redundancies, and even government cabinets choosing a path to war.  Clearly, everyone, everywhere, is subject to the decisions made, or not made, by voting games.  Arguably, they are the most important, and influential, type of game studied by game theorists.  Despite this, there is one aspect of voting games which is poorly understood.  Namely, how to go about constructing a democratically fair voting game.

The recent events dubbed the ``Arab Spring'' highlight how strongly ordinary people can feel about democracy.  Even in those societies, for which, some have claimed, democracy is an alien concept, we find those that are prepared to lay down their lives in its pursuit.  Despite their fervent ardour for democracy, it is not entirely clear what a good democracy is, or how one would go about creating such a thing. 

However, the one thing we can all agree upon is that a democracy that doesn't treat all of its citizens equally is no democracy at all.  As such, many would argue that fairness is one of the founding principles of any modern democracy, the idea of \emph{equal representation for all}.  In other words, everyone must have the same equal ability to influence the outcome of a political decision.  We will term this ability the `Voting Power' of a voter.  

How do you go about measuring this voting power?  The importance of this question has motivated a number of researchers to devise quantitative measures of fairness in election systems. The first paper on voting power was written from a statistical perspective by~\cite{Penrose1946}, unfortunately this paper was largely ignored.  Later on,~\cite{ShapleyShubik1954} proposed a game theoretic method for measuring the a priori power of a voter.  In contrast to both of these approaches, we have decided to take a measure theoretic approach to voting power.  Our reasoning is simple.  We aim to unify the many different strands of voting power research, with one, all encompassing, methodology.  The measure theoretic analysis is applicable to every type of voting game, with every type of voting rule, from simple ``yes/no'' majority voting, to multi-candidate games with voter abstentions.  The key result we will produce is the exciting revelation that the many different techniques can be replaced by just three fundamental probabilities.  

The paper is structured as follows. We first propose an intuitive and statistical interpretation of voter influence, and show that it can be expressed using just three elementary probabilities. We then flesh out the concept of influence by defining events in which the voter is critical to the outcome of the election. We then show that these events, which we call critical events, are at the heart of all voting power techniques.  Having expressed voting power as functions of critical events, we then use measure theory to compute the probabilities of these events, and thus are able to provide, for the first time, a probabilistic representation of the existing voting power techniques. This allows us to see exactly what these techniques measure, and how the various techniques relate to one-another. The paper concludes with a brief discussion of the implications of our work.

\section{A Brief History of Voting Power}
\label{sect:A Brief History of Voting Power}

In a national referendum, giving everyone the same voting power is achieved by following the principle of `one person, one vote'.  If all decisions were made by national referendum, this would suffice to ensure fairness.  However, in many countries we elect and appoint representatives to collectively make decisions on our behalf.  We do not vote upon every new law or budget proposal.  For constituencies of unequal size, we are faced with the problem of finding a voting weight that adheres to our guiding principles of fairness.  Perhaps we should make the voting weight of a representative directly proportional to the population they represent?


Unfortunately, the voting power of a representative is not proportional to their voting weight, as was demonstrated in the seminal work by~\cite{Banzhaf1965}, and reproduced in the following table. 

\begin{center}
\begin{tabular}{*{4}{c}}
Representative & Population & No. of Weighted Votes & Relative Voting Power \\\hline
A & 40,000 & 4 & 7 \\
B & 20,000 & 2 & 1\\
C & 10,000 & 1 & 1 \\
D & 10,000 & 1 & 1 \\
E & 10,000 & 1 & 1 \\
\end{tabular}
\end{center}

One can see that representative A, along with any other single representative can combine to make a majority.  Furthermore, there is only one winning coalition that doesn't include representative A (coalition BCDE).  Thus, using only intuitive argument, and without recourse to voting power theory, it becomes apparent that representative A has considerably more voting power than everyone else, and that the others must share the same minimal voting power. 

With their publication,~\cite{ShapleyShubik1954} presented a method for calculating voting power, loosely based upon assigning a value to a winning coalition, and then distributing that value among those within it.   Other interpretations of voting power have also been proposed, namely~\cite{Banzhaf1965},~\cite{Coleman1971},~\cite{DeeganPackel1978}, and~\cite{Johnston1978}. ~\cite{Straffin1977, Straffin1978, Straffin1979} realised that many of these techniques were, in fact, measuring the same thing, albeit with different underlying probability models.  He even proposed that the Banzhaf and Shapley-Shubik techniques were equivalent. (However, we will later show that the Banzhaf measure and the Shapley-Shubik index are inequivalent, one being a measure of Total Criticality, and the other being a measure of Increasing Criticality - the different types of criticality are discussed in Section \ref{subsect:criticality}).

Despite having a plethora of different techniques to calculate voting power there remain a number of challenges. First, the different techniques can give wildly different results for the same game, making it difficult to know which one to trust~\citep{Straffin1977}.  Second, these techniques are often restricted to games with binary voting choices (``yes'' or ``no''), and do not account for abstentions. And third, all the techniques make an implicit assumption about the probability distribution of the voters, making them impossible to use in a game with a different probability model.  As a consequence, there does not appear to be a universally accepted method of measuring fairness in election systems.  

The starting point for many of the proposed indices was a set of ``intuitive notions'' of what constitutes voting power.  Unsurprisingly, basing a subject upon intuitive notions, instead of axiomatic principles, has led to a debate over which technique is best~\citep{Banzhaf1965,Johnston1978,Laver1978a,FelsMach1998,GelmanKatzTuerlinckx2002,Leech2002b,Paterson2004,Lindner2008}.

The current debate about voting power techniques is reminiscent of the early history of Artificial Neural Networks. Developed using biological considerations, there was much debate over what they could classify, and how they could be parameterised.  Only when they were placed on a common mathematical footing, and shown to be function approximators \citep{Cybenko1989, Hornik1991}, did the debate end.  Similarly, in our considered opinion, existing ways of quantifying voting power are black box techniques, derived on the basis of a restricted and subjective set of dictums. They were not derived from first principles, and so little is known about their validity. This leaves all analysis using these techniques open to debate.

Just as the artificial neural network debate was silenced by showing that they are all function approximators, we hope to end the debate within voting power by showing that all the existing techniques are simple probabilities. And to that end, we approach the subject afresh from basic principles.

\section{A Intuitive Approach to Voting Power}
\label{sect:A Intuitive Approach to Voting Power}

Ask a doctor if you should give up smoking, and they will tell you that smoking increases your chance of dying from lung cancer by, say, 40\%.  This example underlines the fact that probabilistic statements are commonplace within society.  And, more to the point, we are all familiar with the notion of measuring influence using the probability of an outcome conditioned on a controlling factor.  (In this case, the outcome is death, and the probability is conditioned on smoking).  

As we are concerned with voting games, we are interested in two separate notions of influence: how much can a player (voter) increase the likelihood of the outcome, and how much can a player decrease the likelihood? We can define our intuitive notions of influence as follows.

The ability of a player $i$ to positively influence the outcome is,
\begin{equation}
\Pr(\mathrm{Outcome}   \, | \, \mbox{player $i$ does all to ensure the outcome}) - \Pr(\mathrm{Outcome}).
\label{eqn:posgameoutcome}
\end{equation}
And the ability of a player $i$ to negatively influence the outcome is,
\begin{equation}
\Pr(\mathrm{Outcome}) - \Pr(\mathrm{Outcome} \, | \, \mbox{player $i$ does all to prevent the outcome} ).
\label{eqn:neggameoutcome}
\end{equation}

On the basis of both negative and positive measures of influence, we argue that the total amount of influence a player $i$ has, with respect to a specific outcome $O$, is given by the sum of these two expressions, i.e. 
\begin{equation}
\Pr(O \, | \, \mathrm{player} \; i \;  \mathrm{does \; all \; to \; ensure \;}O) - \Pr(O \, | \, \mathrm{player} \; i \;  \mathrm{does \; all \; to \; prevent \;} O).
\label{eqn:totalgameoutcome}
\end{equation}

The above probability statements are very intuitive and basic notions of influence.  They can be easily understood by the general population, as probabilities are fairly standard in every-day life.  Thus, a probabilistic measure of voting power is both desirable, and arguably, essential.

We strongly believe that voting power analysis should be carried out using these probabilities.  As the increased transparency of probabilities compares favourably to the opaqueness of the currently used standard techniques.  This can only help the subject matter be more widely accepted, and ultimately lead to better democracies.  In order to encourage this we will show, in Section \ref{sect:vptr}, that all of the standard techniques are calculating expressions~(\ref{eqn:posgameoutcome}),~(\ref{eqn:neggameoutcome}) and~(\ref{eqn:totalgameoutcome}).

\section{Criticality}
\label{sect:criticality}

The influence of a player on a game's outcome can, in some sense, be measured by the ability of the player to change the outcome.  In which case, we talk about a player being \emph{critical} to the outcome.  

Consider a game with eleven players and a simple majority decision rule.  On a particular issue, player $i$ is in favour, and the motion is passed $8-3$.  If player $i$ were to change its mind, and vote against, the motion will still pass $7-4$.  Clearly $i$ is not \emph{critical} in this scenario, it has no influence on the eventual outcome.  Now consider a different situation in which player $i$ votes in favour, and the motion is passed $6-5$.  If player $i$ were to change its mind, and vote against, the motion will be rejected $5-6$.  Clearly $i$ is \emph{critical} in this scenario.  It has considerable influence on the eventual outcome.
 
In a game in which a player can only vote ``yes'' or ``no'', a player $i$ can be critical in two distinct ways.  It can be critical by increasing its support, i.e. it joins a losing coalition to make it winning. Alternatively, it can can be critical by decreasing its support, i.e. it leaves a winning coalition to make it losing. We refer to these scenarios as increasing criticality and decreasing criticality, respectively. The total influence a player can have on the outcome is the combination of both criticalities, which we term total criticality.

There is one further concept which we briefly describe here, and rigorously define later in section~\ref{subsect:criticality}, it is the idea of criticality assumptions.  These criticality assumptions are needed in games where players have more than two choices, and allow us to extend our analysis to more general settings.  If we assume that a player is either initially voting ``no'', or finally voting ``no'', then we term this Criticality $0$.  Alternatively, if we make no assumption as to how a player will initially, or finally, vote, then we term this Criticality $\delta$.  The combination of the different criticalities and criticality assumptions leads to the notions of criticality as listed in the following table.

\begin{center}
\begin{tabular}{|c|c|c|c|}\hline
& Increasing & Decreasing & Total \\\hline
\multirow{2}{*}{Criticality 0} & Increasing  &  Decreasing   &  Total  \\
 &  Criticality 0 ($IC^{0}$) &   Criticality 0 ($DC^{0}$)  &   Criticality 0 ($TC^{0}$)  \\\hline
\multirow{2}{*}{Criticality $\delta$} &  Increasing  &  Decreasing  & Total \\
 &  Criticality $\delta$  ($IC^{\delta}$) &  Criticality $\delta$ ($DC^{\delta}$)  &  Criticality $\delta$ ($TC^{\delta}$) \\\hline
\end{tabular}
\end{center}

A more in-depth, and intuitive discussion of criticality can be found in~\cite{Das2011a}.


\section{Criticality and Standard Voting Power Techniques}
\label{sect:vpt}

The concept of criticality, as we will now demonstrate, is at the very heart of all voting power techniques.  In fact, the standard techniques used for measuring voting power can be viewed as simply counting the number of voting scenarios in which a player is considered critical.  

Without doubt, the two most widely used voting power techniques are the Banzhaf measure, and the Shapley-Shubik index.  In this section we will examine these techniques in greater detail, along with a selection of other less widely used techniques.  All these techniques were originally proposed for a small subset of voting games in which every player must vote ``yes'' or ``no'', there is no concept of abstention.  Likewise, there are only two possible outcomes to the game, either winning, or losing.  

\subsection{Shapley-Shubik Technique}

~\cite{ShapleyShubik1954} state that the power of an individual member of a legislative body depends on the chance they have of being critical to the success of a winning coalition.  They explain that a voter can be ``pivotal'' when they can turn a possible defeat into a success.  And they construct their index as follows:

\begin{enumerate}
\item There are a group of individuals all willing to vote for some bill.
\item They vote in order.
\item As soon as a majority has voted for it, it is declared passed.
\item The (pivotal) member who voted last is given credit for passing the bill.
\end{enumerate} 

The voting orders are chosen randomly, and they calculate the number of times that a voter is considered pivotal.  The final Shapley-Shubik index is produced by dividing the pivotal count by the total number of voting orders (i.e. $n!$, where $n$ is the number of voters).  They describe this as the frequency with which a particular voter is considered pivotal.  

For a moment, let's examine their term pivotal.  It requires a losing voting scenario in which the voter expresses zero support towards the bill to become winning when they increase their support.  Rather than call the voter pivotal, let's call it critical instead.  Furthermore, as the voter becomes critical by increasing its support, let's call it increasingly critical.  Finally, as the pivotal voter always starts off by expressing zero support for the bill, it should be called increasing criticality zero.  Hence, the Shapley-Shubik index is given by the following algorithm.

\begin{enumerate}
\item Examine every possible voting order.
\item For each voting order identify if it is increasing criticality zero for the given voter.
\item If so, add $1$ to a running count for the given voter.
\item Repeat until all voting orders have been examined, then divide by $n!$.
\end{enumerate} 

It is explicit within the construction of the Shapley-Shubik index that all voting orders are equiprobable, the term $\frac{1}{n!}$ is the probability of a given voting order arising.  With this is mind, it is easy to see that the Shapley-Shubik index is nothing more than the probability of a voter being increasing criticality zero.  If we use the symbol $\omega$ to represent a voting order, then,

\[
\mathrm{ShapleyShubik} = \int_{\omega} \; \; \Ii^{IC^{0}}(\omega) \; \; \Pr(d\omega) \; = \; \Pr(IC^{0}),
\]
where
\[ 
\Ii^{IC^{0}}(\omega) = \left\{ \begin{array}{ll}
1 & \mbox{if $\omega$ is increasing criticality zero for the given voter;} \\
0 & \mbox{otherwise}. \end{array}
\right. 
\]

\subsection{Banzhaf Technique}

~\cite{Banzhaf1965} states that power in a legislative sense is the ability to affect outcomes.  He says specifically the power of a legislator is given by the number of possible voting combinations of the entire legislature in which the legislator can alter the outcome by changing their vote.

We can interpret the ability to alter the outcome through a change of vote as follows: a voter is able to make a losing outcome winning by increasing their support (increasing criticality), or a voter is able to make a winning outcome losing by decreasing their support (decreasing criticality). The combination of increasing and decreasing criticality is called total criticality.  Furthermore, as Banzhaf makes no specific requirement for the voter to be initially voting one way or the other, let's call this total criticality delta.  Hence the Banzhaf measure of power is given by the following algorithm.

\begin{enumerate}
\item Examine every possible voting combination.
\item For each voting combination identify if it is total criticality delta for the given voter.
\item If so, add $1$ to a running count for the given voter.
\item Repeat until all voting combinations have been examined, then divide by ${2^{n}}$.
\end{enumerate} 

Banzhaf assumes that every voting combination is equiprobable, the term $\frac{1}{2^{n}}$ is the probability of a given voting combination arising (where $n$ is the number of players).  With this is mind, it is easy to see that the Banzhaf measure is nothing more than the probability of a voter being total criticality delta.  If we use the symbol $\omega$ to represent a voting combination, then,

\[
\mathrm{Banzhaf} = \int_{\omega} \; \; \Ii^{TC^{\delta}}(\omega) \; \; \Pr(d\omega) \; = \; \Pr(TC^{\delta}).
\]

Where, 

\[ 
\Ii^{TC^{\delta}}(\omega) = \left\{ \begin{array}{ll}
						1 & \mbox{if $\omega$ is total criticality delta for the given voter;} \\
						0 & \mbox{otherwise}.
					\end{array}
			\right. 
\]

\subsection{Straffin}

~\cite{Straffin1977} proposed two different techniques differentiated by the probability model assumed.  The Independence Assumption technique uses a uniform probability distribution, while the Homogeneity Assumption technique assumes all the players vote in favour with the same probability $p \in [0,1]$.  

Straffin defines his measure as the probability that player $i$'s vote will make a difference in the outcome.  Making it, like Banzhaf, a measure of total criticality.  And, as there is no requirement for player $i$ to be initially voting one way or another, it is a measure of total criticality delta.

\[
\mathrm{Straffin} = \int_{\omega} \; \; \Ii^{TC^{\delta}}(\omega) \; \; \Pr(d\omega) \; = \; \Pr(TC^{\delta}).
\]

Both the Independence Assumption technique, and the Homogeneity Assumption technique are given by $\Pr(TC^{\delta})$.  The different probability models are absorbed by the $\Pr(d\omega)$ term.

\subsection{Coleman}

Of all the researchers working in the field of voting power theory, \cite{Coleman1971} was perhaps the first to appreciate the subtle differences that exist between increasing and decreasing criticality~(while \cite{ShapleyShubik1954} understood it was possible to be decreasingly critical, they did not appreciate that this was materially different to being increasingly critical).  He defined two measures of power, the power to initiate action, and the power to prevent action.  

The initiate action measure is a count of the number of times a player can be critical given the coalition is losing.  Hence, it is a measure of increasing criticality conditioned on a coalition being losing.  If we let $\overline{WIN}$ represent the set of losing coalitions, and $\lambda_{\overline{WIN}}$ be the sigma finite marginal measure on $\overline{WIN}$, then:

\[
\mathrm{Coleman \; Initiate \; Action} = \int_{\omega \in \overline{WIN} }  \; \; \Ii^{IC^{\delta}}(\omega) \; \; \lambda_{\overline{WIN}} \; (d\omega) \; = \; \Pr(IC^{\delta} | \; \overline{WIN} ).
\]

The prevent action measure is a count of the number of times a player can be critical given the coalition is winning.  Hence, it is a measure of decreasing criticality conditioned on a coalition being winning.  If we let $WIN$ represent the set of winning coalitions, and $\lambda_{WIN}$ be the sigma finite marginal measure on $WIN$, then:

\[
\mathrm{Coleman \; Prevent \; Action} = \int_{\omega \in WIN } \; \; \Ii^{DC^{\delta}}(\omega) \; \; \lambda_{WIN} \; (d\omega) \; = \; \Pr(DC^{\delta} | \; WIN  ).
\]

%
%
%
%
%
%
%
%
%

\subsection{Johnston}

The~\cite{Johnston1978} index can be described as follows.  Examine every winning coalition, identify those members which can destroy the coalition and allocate a point, or fraction of a point, to them.  In other words, this is a measure of decreasing criticality.  From his paper, it seems reasonable to assume that his index requires the player to express zero approval in order to destroy the coalition, so we will call it a decreasing criticality zero measure.

\[
\mathrm{Johnston} = \int_{\omega} \; \; \Ii^{DC^{0}}(\omega) \; \; \Pr(d\omega) \; = \; \Pr(DC^{0}).
\]

Both the original version of the Johnston index (where one point is added for every destroyable coalition), and the modified version (where a fraction of a point is added) are given by $\Pr(DC^{0})$.  In the modified version, the fraction that is added is a function of $\omega^{N}$ only, hence it can be absorbed within the $\Pr(d\omega)$ term.  Ergo, the modified version is the same as the original version, albeit with a different probability model. The actual fraction that is added is inversely proportional to the number of players that express full support in $\omega^{N}$.  Hence, the probability model of the modified index implies that coalitions with more players expressing full support are less likely to occur.

\subsection{Summary}

This section examined several of the most popular voting power techniques (a discussion of the Deegan-Packel and the Holler Public Good index is left until Appendix \ref{appendix:dphp}).  A fundamental flaw inherent in all these techniques is their dependency upon a specific probability model.  Which makes it almost impossible to use these techniques in a game with a different probability distribution.  However, this failing is easily overcome by using our measure theoretic interpretation instead, as it is defined independent of the underlying probability distribution. 

Any voting power technique that can be calculated by an algorithm which iterates through a set of voting scenarios, testing each one in turn to see if they are critical, can be expressed within our framework.  In a sense, the standard techniques are just specific instances of our general measure theoretic approach.  This is a fundamental point, because it means that any such voting power technique is subject to the analysis and results given in this paper.  We are not aware of any voting power technique for which our results do not apply.

\section{Basic Definitions}
\label{sec:Basic Defs}
Rather than restrict our analysis to a specific voting system, we will introduce here the concept of a generalised voting game.  This generalised voting game encompasses all possible voting games of interest, in that it allows for any voting rule, any number of possible voting outcomes, and any probability distribution of the players.  The definitions of probability and product spaces are taken from~\cite{Pollard2003}.

\subsection{Voting Games}

\begin{definition}
A \textbf{player} is a probability space $(\Xx_{i},\Aa_{i},\Pp_{i})$, where $\Xx_{i}$ is a set, $\Aa_{i}$ is a sigma-field of subsets of $\Xx_{i}$, and $\Pp_{i}$ is a countably additive, nonnegative measure with $\Pp_{i}(\Xx_{i})=1$.  Given a set of $N$ players, where $|N|=n$, the set of all ordered $n$-tuples $(x_{1},\ldots,x_{n})$, with $x_{j} \in \Xx_{j}$ for each $j \in 1, \ldots n$ is denoted as $\Xx_{1} \times \cdots \times \Xx_{n}$ and abbreviated to $\Omega^{N}$.  Given a player $i$, the set of all ordered $(n-1)$-tuples $(x_{1},\ldots,x_{i-1},x_{i+1},x_{n})$, with $x_{j} \in \Xx_{j}$ for each $j \in 1, \ldots,i-1,i+1,\ldots n$ is denoted as $\Xx_{1} \times \cdots \times \Xx_{i-1} \times \Xx_{i+1} \times \cdots \times \Xx_{n}$ and abbreviated to $\Omega^{N \setminus \{i\}}$.  The action of creating a single $(n-1)$-tuple, denoted as $\omega^{N \setminus \{i\}}$, from a single $n$-tuple $\omega^{N}$ by removing the element $x_{i}$ is represented as $\omega^{N} \setminus x_{i}$.  The action of creating a single $n$-tuple, denoted as $\omega^{N}$, from a single $(n-1)$-tuple $\omega^{N \setminus \{i\}}$ by adding an element $x_{i} \in \Xx_{i}$ is represented as $\omega^{N \setminus \{i\}} \times x_{i}$.
\end{definition}

(When there is no risk of confusion the superscript will be dropped from the set $\Omega$). 

The key concepts from the previous definition are: a player $i$ can vote by expressing one of $\{x_{i}\} \in \Xx_{i}$, $\omega^{N}$ represents a voting configuration (an event) with $|N|$ players, $\omega^{N} \setminus \{x_{i}\}$ represents a voting configuration with player $i$ removed, and $\omega^{N \setminus \{i\}} \times \{x_{i}\}$ represents a voting configuration in which player $i$ has joined by expressing $\{x_{i}\}$.


\begin{definition}
Given a set of $N$ players, where $|N|=n$, a set of the form $A_{1} \times \cdots \times A_{n} = \{(x_{1},\ldots,x_{n}) \in \Xx_{1} \times \cdots \times \Xx_{n} : x_{i} \in A_{i}$ for each $i\}$, with $A_{i} \in \Aa_{i}$ for each $i$, is called a measurable rectangle.  The product sigma field $\Aa_{1} \times \cdots \times \Aa_{n}$ on $\Xx_{1} \times \cdots \times \Xx_{n}$ is defined to be the sigma field generated by all measurable rectangles.  Let the product space $(\Xx_{1} \times \cdots \times \Xx_{n}, \Aa_{1} \times \cdots \times \Aa_{n})$ be denoted as $(\Omega,\mathcal{F})$.  
\end{definition}

\begin{definition}
A \textbf{generalised voting game} is a quadruple $(\Omega,\Ff,\Pp, \Ww)$ such that $(\Omega,\Ff,\Pp)$ is the product space generated by a set of $N$ players, $\Pp$ is the product measure, and $\Ww$ is a $\Ff \setminus \Oo$ measurable function, where the elements $O \in \Oo$ are called \textbf{outcomes}.  Such a game is denoted as a $\mathbf{GVG}(\Omega,\Ff,\Pp, \Ww)$.
\label{def:gvg}
\end{definition}

Before moving forward it might be useful to examine the definition of a $GVG$ in greater detail. First, lets take a closer look at the players.  Each one is defined as a probability space, beyond this there is no further restriction.  As such, it can model every possible way of voting, from simple ``yes/no'' with abstentions, to a selection from a continuous set.  Furthermore, there is no requirement for any kind of ordering to exist on the set of possible player actions.  

Now let's examine the voting rule, which is defined by the measurable function $\Ww$.  Beyond the requirement of measurability there is no further restriction.  Hence, it encompasses every possible mapping from the set of possible voting configurations to the set of possible outcomes.  This includes, simple pass/reject outcomes, to a collective full ranking of preference across multi-candidate outcomes.  

We believe that this generalised definition of a voting game encompasses every conceivable real life voting game that one could possibly wish to analyse.

\subsection{Criticality}
\label{subsect:criticality}

Along with the generalised definition of a voting game we have just introduced, we require a generalised definition of criticality.

\subsubsection{Criticality Sets}

\begin{definition}
For a $GVG(\Omega,\Ff,\Pp, \Ww)$, a player $i$ is \textbf{increasingly critical} with respect to an outcome $O \in \Oo$ in an event $\omega^{N} \in \Omega^{N}$ if, and only if, $\Ww(\omega^{N}) \neq O$ and there exists an $\{x'_{i}\} \in \Xx_{i}$ such that $\Ww((\omega^{N} \setminus \{x_{i}\}) \times \{x'_{i}\}) = O$.  Let $O \_ IC_{i}$ denote the set of increasingly critical events for a player $i$ with respect to an outcome $O$.
\label{def:incc}
\end{definition}

\begin{definition}
For a $GVG(\Omega,\Ff,\Pp, \Ww)$, a player $i$ is \textbf{decreasingly critical} with respect to an outcome $O \in \Oo$ in an event $\omega^{N} \in \Omega^{N}$ if, and only if, $\Ww(\omega^{N}) = O$ and there exists an $\{x'_{i}\} \in \Xx_{i}$ such that $\Ww((\omega^{N} \setminus \{x_{i}\}) \times \{x'_{i}\}) \neq O$.  Let $O \_ DC_{i}$ denote the set of decreasingly critical events for a player $i$ with respect to an outcome $O$.
\label{def:decc}
\end{definition}

\begin{definition}
For a $GVG(\Omega,\Ff,\Pp, \Ww)$, a player $i$ is \textbf{totally critical} with respect to an outcome $O \in \Oo$ in an event $\omega^{N} \in \Omega^{N}$ if it is either increasingly critical or decreasingly critical, with respect to the aforementioned outcome and event.  Let $O \_ TC_{i}$ denote the set of totally critical events for a player $i$ with respect to an outcome $O$.  For any given event $\omega^{N}$, it is not possible to be simultaneously both increasingly and decreasingly critical with respect to a given outcome $O$, therefore $(O \_ IC_{i} \cap O \_ DC_{i}) = \emptyset$.
\label{def:totc}
\end{definition}

Increasing Criticality measures a player's ability to create the outcome they want, while Decreasing Criticality measures their ability to prevent an outcome they don't want.  Total Criticality, as the combination of Increasing and Decreasing Criticality, is a measure of a player's total ability to influence an outcome of the game.

In simple ``yes/no'' voting games, there is a bijection between the Increasing Criticality and the Decreasing Criticality events.  However, if any of the players are allowed to abstain, this symmetry can be broken, making it vital to understand which criticality is being measured.

\subsection{Criticality Assumptions}

Definitions \ref{def:incc}, \ref{def:decc}, and \ref{def:totc} measure criticality with respect to two events for a given player $i$.  Criticality assumptions place restrictions on player $i$, governing how it can change its vote between these two events.  

\begin{definition}
\textbf{Criticality} $\mathbf{\delta}$ - With this assumption there is no restriction on how player $i$ can vote between the two different events that define it as critical.  The set of criticality $\delta$ increasingly critical events for player $i$, with respect to an outcome $O$, is denoted by $O \_ IC_{i}^\delta$, and the set of criticality $\delta$ decreasingly critical events for player $i$, with respect to an outcome $O$, is denoted by $O \_ DC_{i}^{\delta}$.
\label{def:critD}
\end{definition}

Criticality $\delta$ is suitable for every type of game, whereas Criticality $0$ (see below) is more appropriate for games in which there is an element of cost, or risk, involved in supporting a decision.  

\begin{definition}
\textbf{Criticality 0} - With this assumption one of the two events that define player $i$ as being critical must have player $i$ voting with its lowest possible support for outcome $O$.  The set of criticality 0 increasingly critical events for player $i$, with respect to an outcome $O$, is denoted by $O \_ IC_{i}^{0}$, and the set of criticality 0 decreasingly critical events for player $i$, with respect to an outcome $O$, is denoted by $O \_ DC_{i}^{0}$.
\label{def:crit0}
\end{definition}

In simple ``yes/no'' voting games Criticality $0$ and Criticality $\delta$ are equivalent.  However, if any of the players are allowed to abstain this equivalence will be lost, and it will be necessary to understand which criticality assumption is being measured.

\subsection{$x_{i}^{O_{\mathrm{max}}}$ and $x_{i}^{O_{\mathrm{min}}}$}

The final piece of notation required before the analysis can begin in earnest is a definition of the elements $x_{i}^{O_{\mathrm{max}}}$ and $x_{i}^{O_{\mathrm{min}}}$.  They are the generalised equivalents of voting ``yes'' and ``no'' in a traditional voting game.

\begin{definition}  For a $GVG(\Omega,\mathcal{F},\Pp, \Ww)$, a player $i$, and an outcome $O \in \mathcal{O}$, let $\Ii^{O} : \Omega^{N} \rightarrow \{\{0\},\{1\}\}$ be the indicator function that an event $\omega^{N}$ is classified as outcome $O$, i.e. when $\Ww(\omega^{N}) = O$.  Then, given an $\omega^{N \setminus \{i\}} \in \Omega^{N \setminus \{i\}}$, define $\{x_{i}^{O_{\mathrm{max}}}\}$ such that for all $x_{i} \in \Xx_{i}$,

\[ \Ii^{O} \left( \omega^{N \setminus \{i\}} \times \; \{x_{i}^{O_{\mathrm{max}}}\} \right) \geq \Ii^{O} \left( \omega^{N \setminus \{i\}} \times \; x_{i} \right). \]

Likewise, define $\{x_{i}^{O_{\mathrm{min}}}\}$ such that for all $x_{i} \in \Xx_{i}$,

\[ \Ii^{O} \left( \omega^{N \setminus \{i\}} \times \; \{x_{i}^{O_{\mathrm{min}}}\} \right) \leq \Ii^{O} \left( \omega^{N \setminus \{i\}} \times \; x_{i} \right). \]

\label{def:imaxmin}
\end{definition}

Clearly $x_{i}^{O_{\mathrm{min}}}$ and $x_{i}^{O_{\mathrm{max}}}$ need not be unique elements within $\Xx_{i}$, and could instead be subsets.  Should this turn out to be the case, the elements $\{x_{i}^{O_{\mathrm{min}}}\}$ and $\{x_{i}^{O_{\mathrm{max}}}\}$ can be taken as any appropriate element within said subsets.  

\section{A Measure Theoretic Analysis of Voting Power}
\label{sect:results}

In this section we will analyse the different notions of criticality as they are applied in the generalised voting game.  This will allow us to create an all encompassing framework for measuring voting power.  Given that the standard techniques we introduced in Section \ref{sect:vpt} are nothing more than examples of criticality analysis in specific instances of our generalised voting game, it naturally follows that the results we present here include, as specific instances, the standard techniques.

In Section \ref{sect:vpt} we expressed each of the techniques as an integral of a criticality based indicator function.  In order to carry out these integrations we will require the following lemma.

\begin{lemma}
For a $GVG(\Omega,\mathcal{F},\Pp, \Ww)$, a player $i$, and any integrable function~$f$,

\[
\int_{\omega^{N} \in \Omega^{N} } f(\omega^{N}) \; \Pp(d\omega^{N})  = \int_{\omega^{N \setminus \{i\}} \in \Omega^{N \setminus \{i\}} } \int_{x_{i} \in \Xx_{i}} f(\omega^{N}) \; \; \mu_{\omega^{N \setminus \{i\}}} (dx_{i}) \; \lambda(d\omega^{N \setminus \{i\}}). 
\]

where $\Pp$ is the sigma finite measure on the probability space $\Omega^{N}$, 
$\lambda$ is the sigma finite marginal measure on $\Omega^{N \setminus \{i\}}$, and $\mu_{\omega^{N \setminus \{i\}}}$ is the sigma finite marginal measure on $\Xx_{i}$, given $\omega^{N \setminus \{i\}}$.
\label{lem:multiint-i}
\end{lemma}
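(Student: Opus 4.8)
The plan is to recognise this identity as the Fubini--Tonelli theorem applied to the product structure of a $GVG$, after singling out player $i$'s coordinate. By Definition~\ref{def:gvg}, $\Pp$ is the product measure on the product space $(\Omega,\Ff)$ generated by the $n$ players. Relabelling the coordinates so that player $i$ is listed last gives a measurable isomorphism $\Omega^{N} \cong \Omega^{N\setminus\{i\}} \times \Xx_{i}$ under which $\Pp$ corresponds to $\lambda \otimes \Pp_{i}$, where $\lambda = \prod_{j\neq i}\Pp_{j}$ is the marginal product measure on $\Omega^{N\setminus\{i\}}$. Because $\Pp$ is a genuine product measure, the conditional marginal measure $\mu_{\omega^{N\setminus\{i\}}}$ on $\Xx_{i}$ does not in fact depend on $\omega^{N\setminus\{i\}}$ and equals $\Pp_{i}$ (in a general, non-product reading the same step would instead invoke the disintegration theorem). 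Each $\Pp_{j}$, and hence both $\lambda$ and $\Pp_{i}$, is a probability measure and therefore sigma-finite, so the hypotheses required below are met.

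I would then establish the identity by the standard measure-theoretic induction. First, for the indicator of a measurable rectangle $A = A_{1}\times\cdots\times A_{n}$ with $A_{j}\in\Aa_{j}$, the left-hand side is $\Pp(A)=\prod_{j}\Pp_{j}(A_{j})$ by multiplicativity of the product measure; the inner integral on the right evaluates to $\Pp_{i}(A_{i})\,\mathbf{1}_{\prod_{j\neq i}A_{j}}(\omega^{N\setminus\{i\}})$, and integrating this against $\lambda$ returns $\Pp_{i}(A_{i})\prod_{j\neq i}\Pp_{j}(A_{j})$, so the two sides agree. Since the measurable rectangles form a $\pi$-system generating $\Ff$, a monotone-class (or $\pi$--$\lambda$) argument extends the identity to indicators of all sets in $\Ff$. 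Linearity then gives it for nonnegative simple functions, monotone convergence lifts it to all nonnegative measurable functions, and finally decomposing $f = f^{+}-f^{-}$ handles every integrable $f$.

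The main obstacle, and the one genuinely measure-theoretic point, is the measurability of the inner integral: one must check that $\omega^{N\setminus\{i\}} \mapsto \int_{\Xx_{i}} f(\omega^{N})\,\mu_{\omega^{N\setminus\{i\}}}(dx_{i})$ is $\lambda$-measurable, so that the outer integral is even defined. This is exactly what the Fubini--Tonelli theorem for sigma-finite product measures guarantees, and I would invoke it in the form given by~\cite{Pollard2003}, whose product-space framework is the one adopted in Section~\ref{sec:Basic Defs}. The remaining steps are routine, so the only real care needed lies in the rectangle base case and in confirming that the sigma-finiteness required by that theorem indeed follows from each player being a probability space.
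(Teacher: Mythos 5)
Your proposal is correct and follows essentially the same route as the paper, whose entire proof is the observation that $(\Omega,\Ff,\Pp)$ is the product space of the individual players followed by an appeal to Fubini's Theorem. You simply unpack that appeal in full (rectangles, monotone class, simple functions, monotone convergence, $f = f^{+}-f^{-}$) and correctly note the one point the paper leaves implicit, namely that for a genuine product measure $\mu_{\omega^{N\setminus\{i\}}}$ reduces to $\Pp_{i}$ and sigma-finiteness is automatic since each player is a probability space.
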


\begin{proof}
This follows from Definition \ref{def:gvg} with the realisation that $(\Omega,\mathcal{F},\Pp)$ is the product space made up of the individual players, and a subsequent application of Fubini's Theorem.  
\end{proof}

\subsection{Criticality $\delta$}

\begin{lemma}
For a $GVG(\Omega,\mathcal{F},\Pp, \Ww)$ and a player $i$, 

\[
\Pr(O \_ DC_{i}^{\delta})  =  \Pr(O) - \Pr(O | \{x_{i}^{O_{\mathrm{min}}}\}).
\]

\label{lem:prdcDprobv1}
\end{lemma}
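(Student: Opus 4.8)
The plan is to write the target probability as the integral of the decreasing‑criticality indicator over $\Omega^{N}$, reduce that indicator pointwise to a difference of two copies of the outcome indicator $\Ii^{O}$, and then evaluate each piece using Lemma~\ref{lem:multiint-i}. First I would let $\Ii^{DC^{\delta}}$ denote the indicator of the set $O \_ DC_{i}^{\delta}$, so that $\Pr(O \_ DC_{i}^{\delta}) = \int_{\omega^{N} \in \Omega^{N}} \Ii^{DC^{\delta}}(\omega^{N})\,\Pp(d\omega^{N})$. By Definition~\ref{def:decc}, an event $\omega^{N}$ lies in $O \_ DC_{i}^{\delta}$ exactly when $\Ww(\omega^{N}) = O$ and player $i$ has some alternative vote $x'_{i}$ with $\Ww((\omega^{N} \setminus \{x_{i}\}) \times \{x'_{i}\}) \neq O$. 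Writing $\omega^{N \setminus \{i\}} = \omega^{N} \setminus \{x_{i}\}$, the first condition is $\Ii^{O}(\omega^{N}) = 1$, and by Definition~\ref{def:imaxmin} the existence of a vote driving the outcome away from $O$ is equivalent to $\Ii^{O}(\omega^{N \setminus \{i\}} \times \{x_{i}^{O_{\mathrm{min}}}\}) = 0$, since $\{x_{i}^{O_{\mathrm{min}}}\}$ minimises $\Ii^{O}$ over player $i$'s choices. This yields the product form $\Ii^{DC^{\delta}}(\omega^{N}) = \Ii^{O}(\omega^{N})\bigl(1 - \Ii^{O}(\omega^{N \setminus \{i\}} \times \{x_{i}^{O_{\mathrm{min}}}\})\bigr)$.

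Next I would collapse this product to a difference. The key observation is that whenever $\Ii^{O}(\omega^{N \setminus \{i\}} \times \{x_{i}^{O_{\mathrm{min}}}\}) = 1$ the minimum of $\Ii^{O}$ over player $i$'s votes equals $1$, which forces $\Ii^{O}(\omega^{N}) = 1$ as well; in the complementary case the subtracted term vanishes. A short case check over the three possible outcome configurations then gives the clean identity
\[
\Ii^{DC^{\delta}}(\omega^{N}) = \Ii^{O}(\omega^{N}) - \Ii^{O}(\omega^{N \setminus \{i\}} \times \{x_{i}^{O_{\mathrm{min}}}\}).
\]
Integrating termwise against $\Pp$, the first term is $\int_{\Omega^{N}} \Ii^{O}(\omega^{N})\,\Pp(d\omega^{N}) = \Pr(O)$. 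For the second term I would apply Lemma~\ref{lem:multiint-i} to split the integral over $x_{i}$ and $\omega^{N \setminus \{i\}}$; because $\Ii^{O}(\omega^{N \setminus \{i\}} \times \{x_{i}^{O_{\mathrm{min}}}\})$ does not depend on the inner variable $x_{i}$, the inner integral $\int_{\Xx_{i}} \mu_{\omega^{N \setminus \{i\}}}(dx_{i}) = 1$ (a consequence of $\Pp$ being a product of probability measures), leaving $\int_{\Omega^{N \setminus \{i\}}} \Ii^{O}(\omega^{N \setminus \{i\}} \times \{x_{i}^{O_{\mathrm{min}}}\})\,\lambda(d\omega^{N \setminus \{i\}})$. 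This last integral is precisely $\Pr(O \mid \{x_{i}^{O_{\mathrm{min}}}\})$, the probability of outcome $O$ when player $i$'s coordinate is fixed at $x_{i}^{O_{\mathrm{min}}}$ and the remaining coordinates are integrated against their marginal. Combining the two pieces delivers $\Pr(O \_ DC_{i}^{\delta}) = \Pr(O) - \Pr(O \mid \{x_{i}^{O_{\mathrm{min}}}\})$.

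I expect the main obstacle to be the passage from the product form to the difference identity: this is where the extremal defining property of $x_{i}^{O_{\mathrm{min}}}$ from Definition~\ref{def:imaxmin} is genuinely used, and one must verify that the identity holds pointwise across all outcome configurations — in particular the implication ``$\Ii^{O}$ at $x_{i}^{O_{\mathrm{min}}}$ equals $1$ forces $\Ii^{O}(\omega^{N}) = 1$'' — rather than only on the critical set itself. A secondary point worth stating explicitly is the identification of the marginal integral with $\Pr(O \mid \{x_{i}^{O_{\mathrm{min}}}\})$, which relies on $\Pp$ being a product measure so that conditioning on player $i$'s vote amounts to fixing that single coordinate while integrating the others against $\lambda$.
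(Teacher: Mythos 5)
Your proposal is correct and follows essentially the same route as the paper's own proof: both express $\Pr(O \_ DC_{i}^{\delta})$ as the integral of the indicator $\Ii^{O}(\omega^{N}) - \Ii^{O}(\omega^{N \setminus \{i\}} \times \{x_{i}^{O_{\mathrm{min}}}\})$, evaluate the first term as $\Pr(O)$, and use Lemma~\ref{lem:multiint-i} together with the constancy of the second integrand in $x_{i}$ to identify the second term with $\Pr(O \mid \{x_{i}^{O_{\mathrm{min}}}\})$. Your explicit case check justifying the passage from the product form to the difference identity is a welcome addition, since the paper asserts that identity without verification.
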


\begin{proof}
Let $\omega^{N \setminus \{i\}} = \omega^{N} \setminus x_{i}$, then by Definitions \ref{def:decc}, \ref{def:critD} and \ref{def:imaxmin}, the indicator function $\Ii^{O \_ DC_{i}^{\delta}} : \Omega^{N}  \rightarrow \{\{0\},\{1\}\}$ for the set $O \_ DC_{i}^{\delta}$  is given by,

\[ \Ii^{O \_ DC_{i}^{\delta}}(\omega^{N}) = \left\{ \begin{array}{ll}
						1 & \mbox{if $\Ww(\omega^{N}) = O$ and $\Ww(\omega^{N \setminus \{i\}} \times \{x_{i}^{O_{\mathrm{min}}} \}) \neq O$}; \\
						0 & \mbox{otherwise}.
					\end{array}
			\right. \]

$\Ii^{O \_ DC_{i}^{\delta}}$ can be expressed as follows, where $\Ii^{O}$ is the indicator function for the event being classified as outcome $O$,

\begin{equation}
\Ii^{O \_ DC_{i}^{\delta}} = \Ii^{O}( \omega^{N}) - \Ii^{O}(\omega^{N \setminus \{i\}} \times \{x_{i}^{O_{\mathrm{min}}} \} ).
\label{eq:inddciD}
\end{equation}

Integrating $\Ii^{O \_ DC_{i}^{\delta}}$ over all the events in the $GVG$ creates the expectation of the random variable $\Ee \Ii^{O \_ DC_{i}^{\delta}}$, which we can interpret as the probability of an event $\omega^{N}$ being in the set $O \_ DC_{i}^{\delta}$.

\begin{eqnarray}
\Pr(O \_ DC_{i}^{\delta}) & = & \int_{\omega^{N} \in \Omega^{N}} \Ii^{O}(\omega^{N}) \; - \; \Ii^{O}(\omega^{N \setminus \{i\}} \times \{x_{i}^{O_{\mathrm{min}}}\}) \; \; \Pp (d\omega^{N}). \nonumber \\
\Pr(O \_ DC_{i}^{\delta}) & = & \int_{\omega^{N} \in \Omega^{N}} \Ii^{O}(\omega^{N}) \; \; \Pp (d\omega^{N}) \; \nonumber \\
&&  \hspace{.85cm} - \; \int_{\omega^{N} \in \Omega^{N}} \Ii^{O}(\omega^{N \setminus \{i\}} \times \{x_{i}^{O_{\mathrm{min}}}\}) \; \; \Pp (d\omega^{N}). \nonumber \\
\Pr(O \_ DC_{i}^{\delta}) & = & \mathrm{I}1 - \mathrm{I}2. \label{eq:dcDI2v1}
\end{eqnarray}

Here, $\mathrm{I}1$ is the expectation of the random variable $\Ee \Ii^{O}$, which we can interpret as the probability of an event $\omega^{N}$ being in the set $O$.  

\begin{equation}
\mathrm{I}1  =  \Pr(O). \label{eq:dcDI2bv1}
\end{equation}

Using Lemma \ref{lem:multiint-i}, $\mathrm{I}2$ can be expressed as,

\[
\mathrm{I}2 = \int_{\omega^{N \setminus \{i\}} \in \Omega^{N \setminus \{i\}}} \int_{x_{i} \in \Xx_{i}} \Ii^{O}(\omega^{N \setminus \{i\}} \times \{x_{i}^{O_{\mathrm{min}}}\}) \; \; \mu_{\omega^{N \setminus \{i\}}} (dx_{i}) \; \lambda (d\omega^{N \setminus \{i\}}).
\]

The term $\Ii^{O}(\omega^{N \setminus \{i\}} \times \{x_{i}^{O_{\mathrm{min}}}\})$ is constant with respect to $x_{i}$, therefore it can be brought outside of the inner integral to give,

\begin{eqnarray}
\mathrm{I}2 & = & \int_{\omega^{N \setminus \{i\}} \in \Omega^{N \setminus \{i\}}} \Ii^{O}(\omega^{N \setminus \{i\}} \times \{x_{i}^{O_{\mathrm{min}}}\}) \int_{x_{i} \in \Xx_{i}} \; \; \mu_{\omega^{N \setminus \{i\}}} (dx_{i}) \; \lambda (d\omega^{N \setminus \{i\}}). \nonumber \\
\mathrm{I}2 & = & \int_{\omega^{N \setminus \{i\}} \in \Omega^{N \setminus \{i\}}} \Ii^{O}(\omega^{N \setminus \{i\}} \times \{x_{i}^{O_{\mathrm{min}}}\})  \; \lambda (d\omega^{N \setminus \{i\}}). \nonumber 
\end{eqnarray}

The construction of the product space $(\Omega,\mathcal{F},\Pp)$ ensures, 

\[
 \lambda (d\omega^{N \setminus \{i\}}) = \lambda_{x_{i}^{O_{\mathrm{min}}}} (d\omega^{N \setminus \{i\}}).
\]

Therefore $\mathrm{I}2$ can be expressed as,

\[
\mathrm{I}2 = \! \! \!  \int_{\omega^{N \setminus \{i\}} \in \Omega^{N \setminus \{i\}}} \Ii^{O}(\omega^{N \setminus \{i\}} \times \{x_{i}^{O_{\mathrm{min}}}\})  \; \lambda_{x_{i}^{O_{\mathrm{min}}}} (d\omega^{N \setminus \{i\}}).
\]

$\mathrm{I}2$ is the expectation of the random variable $\Ee \Ii^{O}$, given player $i$ has expressed $\{x_{i}^{O_{\mathrm{min}}}\}$.  Hence,

\begin{equation}
\mathrm{I}2  =  \Pr(O | \{x_{i}^{O_{\mathrm{min}}}\}). \label{eq:dcDI2cv1}
\end{equation}

Substituting Equations (\ref{eq:dcDI2bv1}) and (\ref{eq:dcDI2cv1}) in Equation (\ref{eq:dcDI2v1}) yields the result.  
\end{proof}

\begin{lemma}
For a $GVG(\Omega,\mathcal{F},\Pp, \Ww)$ and a player $i$,

\[
\Pr(O \_ IC_{i}^{\delta})  =  \Pr(O | \{x_{i}^{O_{\mathrm{max}}}\}) - \Pr(O).
\]

\label{lem:pricDprobv1}
\end{lemma}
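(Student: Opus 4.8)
The plan is to mirror the argument of Lemma \ref{lem:prdcDprobv1}, swapping the roles of $\{x_{i}^{O_{\mathrm{min}}}\}$ and $\{x_{i}^{O_{\mathrm{max}}}\}$ and reversing the sign of the indicator difference. First I would write down the indicator function $\Ii^{O \_ IC_{i}^{\delta}}$ for the set of criticality $\delta$ increasingly critical events. Combining Definition \ref{def:incc} (increasing criticality requires $\Ww(\omega^{N}) \neq O$ together with the existence of some alternative vote $\{x'_{i}\}$ yielding $O$), Definition \ref{def:critD} (no restriction on how player $i$ votes), and Definition \ref{def:imaxmin} (which characterises $\{x_{i}^{O_{\mathrm{max}}}\}$ as a maximiser of $\Ii^{O}$ over player $i$'s actions), an event is increasingly critical exactly when $\Ii^{O}(\omega^{N \setminus \{i\}} \times \{x_{i}^{O_{\mathrm{max}}}\}) = 1$ and $\Ii^{O}(\omega^{N}) = 0$.

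The key identity I would establish is
\[
\Ii^{O \_ IC_{i}^{\delta}} = \Ii^{O}(\omega^{N \setminus \{i\}} \times \{x_{i}^{O_{\mathrm{max}}}\}) - \Ii^{O}(\omega^{N}),
\]
which is the increasing-criticality counterpart of Equation (\ref{eq:inddciD}). The main point to verify here --- and the step I expect to require the most care --- is that this difference is genuinely a $\{0,1\}$-valued indicator for $O \_ IC_{i}^{\delta}$. This follows because Definition \ref{def:imaxmin} guarantees $\Ii^{O}(\omega^{N \setminus \{i\}} \times \{x_{i}^{O_{\mathrm{max}}}\}) \geq \Ii^{O}(\omega^{N})$ for every event (taking $x_{i}$ to be the component already present in $\omega^{N}$), so the difference is never negative; it equals $1$ precisely when the maximising vote attains $O$ while the actual vote does not, which is exactly the increasing criticality condition. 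One must also note that the existence of some $\{x'_{i}\}$ achieving $O$ required in Definition \ref{def:incc} is equivalent to $\Ii^{O}$ attaining the value $1$ at its maximiser $\{x_{i}^{O_{\mathrm{max}}}\}$.

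With the identity in hand, the remainder is a routine integration that parallels Lemma \ref{lem:prdcDprobv1}. Integrating over $\Omega^{N}$ and splitting the difference gives $\Pr(O \_ IC_{i}^{\delta}) = \mathrm{I}1 - \mathrm{I}2$, where $\mathrm{I}2 = \int_{\omega^{N}} \Ii^{O}(\omega^{N}) \, \Pp(d\omega^{N}) = \Pr(O)$, and $\mathrm{I}1 = \int_{\omega^{N}} \Ii^{O}(\omega^{N \setminus \{i\}} \times \{x_{i}^{O_{\mathrm{max}}}\}) \, \Pp(d\omega^{N})$. For $\mathrm{I}1$ I would apply Lemma \ref{lem:multiint-i}, observe that the integrand is constant in $x_{i}$ and so pull it outside the inner integral, use that $\mu_{\omega^{N \setminus \{i\}}}$ is a probability measure so the inner integral evaluates to $1$, and finally reinterpret the marginal as the conditional measure given player $i$ plays $\{x_{i}^{O_{\mathrm{max}}}\}$, exactly as the $\mathrm{I}2$ computation was handled in the previous lemma. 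This yields $\mathrm{I}1 = \Pr(O \mid \{x_{i}^{O_{\mathrm{max}}}\})$, and substituting both terms back gives $\Pr(O \_ IC_{i}^{\delta}) = \Pr(O \mid \{x_{i}^{O_{\mathrm{max}}}\}) - \Pr(O)$, as required.
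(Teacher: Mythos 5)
Your proposal is correct and follows essentially the same route as the paper's own proof: the same indicator identity $\Ii^{O \_ IC_{i}^{\delta}} = \Ii^{O}(\omega^{N \setminus \{i\}} \times \{x_{i}^{O_{\mathrm{max}}}\}) - \Ii^{O}(\omega^{N})$, the same split into $\mathrm{I}1 - \mathrm{I}2$, and the same use of Lemma \ref{lem:multiint-i} to evaluate $\mathrm{I}1$ as $\Pr(O \mid \{x_{i}^{O_{\mathrm{max}}}\})$. Your explicit check that the difference of indicators is nonnegative (hence genuinely $\{0,1\}$-valued) is a small addition the paper leaves implicit, but the argument is otherwise the same.
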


\begin{proof}
Let $\omega^{N \setminus \{i\}} = \omega^{N} \setminus x_{i}$, then by Definitions \ref{def:incc}, \ref{def:critD} and \ref{def:imaxmin}, the indicator function $\Ii^{O \_ IC_{i}^{\delta}} : \Omega^{N}  \rightarrow \{0,1\}$ for the set $O \_ IC_{i}^{\delta}$ is given by,

\[ \Ii^{O \_ IC_{i}^{\delta}}(\omega^{N}) = \left\{ \begin{array}{ll}
						1 & \mbox{if $\Ww(\omega^{N \setminus \{i\}} \times \{x_{i}^{O_{\mathrm{max}}} \}) = O$ and $\Ww(\omega^{N}) \neq O$}; \\
						0 & \mbox{otherwise}.
					\end{array}
			\right. \]

$\Ii^{O \_ IC_{i}^{\delta}}$ can be expressed as follows, where $\Ii^{O}$ is the indicator function for the event being classified as outcome $O$,

\begin{equation}
\Ii^{O \_ IC_{i}^{\delta}} = \Ii^{O}(\omega^{N \setminus \{i\}} \times \{x_{i}^{O_{\mathrm{max}}} \} ) - \Ii^{O}( \omega^{N}).
\label{eq:indiciD}
\end{equation}

Integrating $\Ii^{O \_ IC_{i}^{\delta}}$ over all the events in the $GVG$ creates the expectation of the random variable $\Ee \Ii^{O \_ IC_{i}^{\delta}}$, which we can interpret as the probability of an event $\omega^{N}$ being in the set $O \_ IC_{i}^{\delta}$.

\begin{eqnarray}
\Pr(O \_ IC_{i}^{\delta}) & = & \int_{\omega^{N} \in \Omega^{N}} \Ii^{O}(\omega^{N \setminus \{i\}} \times \{x_{i}^{O_{\mathrm{max}}}\}) \;  - \; \Ii^{O}(\omega^{N}) \; \; \Pp (d\omega^{N}).\nonumber \\
\Pr(O \_ IC_{i}^{\delta}) & = & \int_{\omega^{N} \in \Omega^{N}} \Ii^{O}(\omega^{N \setminus \{i\}} \times \{x_{i}^{O_{\mathrm{max}}}\}) \; \; \Pp (d\omega^{N}) \; \nonumber \\
 && \hspace{.75cm} - \; \int_{\omega^{N} \in \Omega^{N}} \Ii^{O}(\omega^{N}) \; \; \Pp (d\omega^{N}).\nonumber \\
\Pr(O \_ IC_{i}^{\delta}) & = & \mathrm{I}1 - \mathrm{I}2. \label{eq:icDI2v1}
\end{eqnarray}

Here, $\mathrm{I}2$ is the expectation of the random variable $\Ee \Ii^{O}$, which we can interpret as the probability of an event $\omega^{N}$ being in the set $O$.

\begin{equation}
\mathrm{I}2 = \Pr(O). \label{eq:icDI2bv1}
\end{equation}

Using Lemma \ref{lem:multiint-i}, $\mathrm{I}1$ can be expressed as follows,

\[
\mathrm{I}1 = \int_{\omega^{N \setminus \{i\}} \in \Omega^{N \setminus \{i\}}} \int_{x_{i} \in \Xx_{i}} \Ii^{O}(\omega^{N \setminus \{i\}} \times \{x_{i}^{O_{\mathrm{max}}}\}) \; \; \mu_{\omega^{N \setminus \{i\}}} (dx_{i}) \; \lambda (d\omega^{N \setminus \{i\}}).
\]

The term $\Ii^{O}(\omega^{N \setminus \{i\}} \times \{x_{i}^{O_{\mathrm{max}}}\})$ is constant with respect to $x_{i}$, therefore it can be brought outside of the inner integral to give,

\begin{eqnarray}
\mathrm{I}1 & = & \int_{\omega^{N \setminus \{i\}} \in \Omega^{N \setminus \{i\}}} \Ii^{O}(\omega^{N \setminus \{i\}} \times \{x_{i}^{O_{\mathrm{max}}}\}) \int_{x_{i} \in \Xx_{i}} \; \; \mu_{\omega^{N \setminus \{i\}}} (dx_{i}) \; \lambda (d\omega^{N \setminus \{i\}}). \nonumber \\
\mathrm{I}1 & = & \int_{\omega^{N \setminus \{i\}} \in \Omega^{N \setminus \{i\}}} \Ii^{O}(\omega^{N \setminus \{i\}} \times \{x_{i}^{O_{\mathrm{max}}}\})  \; \lambda (d\omega^{N \setminus \{i\}}). \nonumber 
\end{eqnarray}

The construction of the product space $(\Omega,\mathcal{F},\Pp)$ ensures,

\[
\lambda (d\omega^{N \setminus \{i\}}) = \lambda_{x_{i}^{O_{\mathrm{max}}}} (d\omega^{N \setminus \{i\}}).
\]

Therefore $\mathrm{I}1$ can be expressed as,

\[
\mathrm{I}1 = \int_{\omega^{N \setminus \{i\}} \in \Omega^{N \setminus \{i\}}}  \Ii^{O}(\omega^{N \setminus \{i\}} \times \{x_{i}^{O_{\mathrm{max}}}\})  \;  \lambda_{x_{i}^{O_{\mathrm{max}}}} (d\omega^{N \setminus \{i\}}). 
\]

$\mathrm{I}1$ is the expectation of the random variable $\Ee \Ii^{O}$, given player $i$ has expressed $\{x_{i}^{O_{\mathrm{max}}}\}$.  Hence,

\begin{equation}
\mathrm{I}1 = \Pr(O | \{x_{i}^{O_{\mathrm{max}}}\}).
\label{eq:icDI2cv1}
\end{equation}

Substituting Equations (\ref{eq:icDI2bv1}) and (\ref{eq:icDI2cv1}) in Equation (\ref{eq:icDI2v1}) yields the result.  
\end{proof}

\begin{corol}
For a $GVG(\Omega,\mathcal{F},\Pp, \Ww)$ and a player $i$,

\[
\Pr(O \_ TC_{i}^{\delta}) = \Pr(O | \{x_{i}^{O_{\mathrm{max}}}\})- \Pr(O | \{x_{i}^{O_{\mathrm{min}}}\}).
\]

\label{cor:prtcDprobv1}
\end{corol}

\begin{proof}
By Lemmas \ref{lem:prdcDprobv1} and \ref{lem:pricDprobv1},

\begin{eqnarray}
\Pr(O \_ DC_{i}^{\delta})  & = & \Pr(O) - \Pr(O | \{x_{i}^{O_{\mathrm{min}}}\}). \label{eq:tcDeq1v1} \\
\Pr(O \_ IC_{i}^{\delta})  & = & \Pr(O | \{x_{i}^{O_{\mathrm{max}}}\}) - \Pr(O). \label{eq:tcDeq2v1}
\end{eqnarray}

By Definition \ref{def:totc},

\begin{equation}
\Pr(O \_ TC_{i}^{\delta}) = \Pr(O \_ DC_{i}^{\delta}) + \Pr(O \_ IC_{i}^{\delta}). \label{eq:tcDeq3v1}
\end{equation}

Substituting Equations (\ref{eq:tcDeq1v1}) and (\ref{eq:tcDeq2v1}) in Equation (\ref{eq:tcDeq3v1}) yields the result.  
\end{proof}

\subsection{Criticality 0}

This section provides the results for the more specialised criticality 0.  


\begin{lemma}
For a $GVG(\Omega,\mathcal{F},\Pp, \Ww)$ and a player $i$,

\[
\Pr(O \_ DC_{i}^{0})  =  \Pr(O) - \Pr(O | \{x_{i}^{O_{\mathrm{min}}}\}).
\]

\label{lem:prdc0probv1}
\end{lemma}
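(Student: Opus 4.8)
The plan is to observe that for decreasing criticality the Criticality $0$ and Criticality $\delta$ assumptions impose exactly the same requirement on an event, so that the indicator function $\Ii^{O \_ DC_{i}^{0}}$ coincides with $\Ii^{O \_ DC_{i}^{\delta}}$ from the proof of Lemma~\ref{lem:prdcDprobv1}. Once this is established the desired probability formula follows by repeating, verbatim, the integration carried out there. Thus the whole content of this lemma is really the collapse of the $0$-versus-$\delta$ distinction in the decreasing case.

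First I would write down the indicator for $O \_ DC_{i}^{0}$ directly from Definitions~\ref{def:decc}, \ref{def:crit0} and~\ref{def:imaxmin}. Decreasing criticality requires an event with $\Ww(\omega^{N}) = O$ together with some alternative vote $\{x'_{i}\}$ for which $\Ww((\omega^{N} \setminus \{x_{i}\}) \times \{x'_{i}\}) \neq O$; the Criticality $0$ assumption additionally demands that one of the two events have player $i$ casting its minimal-support vote $\{x_{i}^{O_{\mathrm{min}}}\}$. The key step, and the only point genuinely requiring argument, is to show that this minimal-support event must be the \emph{broken} event, namely the one classified as not-$O$. If instead the $O$-event carried $\{x_{i}^{O_{\mathrm{min}}}\}$, then $\Ww(\omega^{N \setminus \{i\}} \times \{x_{i}^{O_{\mathrm{min}}}\}) = O$, so $\Ii^{O}(\omega^{N \setminus \{i\}} \times \{x_{i}^{O_{\mathrm{min}}}\}) = 1$; but by Definition~\ref{def:imaxmin} the vote $\{x_{i}^{O_{\mathrm{min}}}\}$ minimises $\Ii^{O}$, which would force $\Ii^{O}(\omega^{N \setminus \{i\}} \times x_{i}) = 1$ for \emph{every} $x_{i}$, contradicting the existence of a vote that breaks the outcome. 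Hence the Criticality $0$ constraint is satisfied exactly by placing the broken event at $\{x_{i}^{O_{\mathrm{min}}}\}$.

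Symmetrically, in the Criticality $\delta$ case the existential clause ``there exists $\{x'_{i}\}$ breaking the outcome'' is equivalent to the single condition $\Ww(\omega^{N \setminus \{i\}} \times \{x_{i}^{O_{\mathrm{min}}}\}) \neq O$, again because $\{x_{i}^{O_{\mathrm{min}}}\}$ attains the minimum of $\Ii^{O}$: any breaking vote exists if and only if the minimiser itself breaks the outcome. Both assumptions therefore produce the identical indicator
\[
\Ii^{O \_ DC_{i}^{0}} = \Ii^{O}(\omega^{N}) - \Ii^{O}(\omega^{N \setminus \{i\}} \times \{x_{i}^{O_{\mathrm{min}}}\}),
\]
which is precisely Equation~(\ref{eq:inddciD}).

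Finally I would integrate this indicator over $\Omega^{N}$, split the integral into $\mathrm{I}1 = \Pr(O)$ and the remaining term $\mathrm{I}2$, and apply Lemma~\ref{lem:multiint-i} to $\mathrm{I}2$: the factor $\Ii^{O}(\omega^{N \setminus \{i\}} \times \{x_{i}^{O_{\mathrm{min}}}\})$ is constant in $x_{i}$, so it comes outside the inner integral, the product measure collapses to the marginal $\lambda_{x_{i}^{O_{\mathrm{min}}}}$, and the resulting expectation is identified as $\Pr(O | \{x_{i}^{O_{\mathrm{min}}}\})$, exactly as in Lemma~\ref{lem:prdcDprobv1}. This gives $\Pr(O \_ DC_{i}^{0}) = \Pr(O) - \Pr(O | \{x_{i}^{O_{\mathrm{min}}}\})$. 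The main obstacle is conceptual rather than computational: justifying the equality of the two indicators, i.e. that for decreasing criticality the $0$ and $\delta$ assumptions cannot be distinguished, after which every measure-theoretic manipulation is a routine reprise of the earlier proof.
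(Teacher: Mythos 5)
Your proposal is correct and follows essentially the same route as the paper: both establish that $\Ii^{O \_ DC_{i}^{0}}$ coincides with $\Ii^{O \_ DC_{i}^{\delta}}$ from Equation~(\ref{eq:inddciD}) and then reduce to Lemma~\ref{lem:prdcDprobv1}. Your explicit argument that the minimal-support vote must sit in the broken (not-$O$) event is a welcome elaboration of a step the paper leaves implicit, but it does not change the structure of the proof.
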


\begin{proof}
Let $\omega^{N \setminus \{i\}} = \omega^{N} \setminus x_{i}$, then by Definitions \ref{def:decc}, \ref{def:crit0} and \ref{def:imaxmin}, the indicator function $\Ii^{O \_ DC_{i}^{0}} : \Omega^{N}  \rightarrow \{0,1\}$ for the set $O \_ DC_{i}^{0}$ is given by,

\[ \Ii^{O \_ DC_{i}^{0}}(\omega^{N}) = \left\{ \begin{array}{ll}
						1 & \mbox{if $\Ww(\omega^{N}) = O$ and $\Ww(\omega^{N \setminus \{i\}} \times \{x_{i}^{O_{\mathrm{min}}} \}) \neq O$}; \\
						0 & \mbox{otherwise}.
					\end{array}
			\right. \]

$\Ii^{O \_ DC_{i}^{0}}$ can be expressed as follows, where $\Ii^{O}$ is the indicator function for the event being classified as outcome $O$,

\begin{equation}
\Ii^{O \_ DC_{i}^{0}} = \Ii^{O}( \omega^{N}) - \Ii^{O}(\omega^{N \setminus \{i\}} \times \{x_{i}^{O_{\mathrm{min}}} \} ).
\label{eq:inddci0}
\end{equation}

Comparing Equations (\ref{eq:inddci0}) and (\ref{eq:inddciD}) gives,
\[ \Ii^{O \_ DC_{i}^{0}} = \Ii^{O \_ DC_{i}^{\delta}}.\] Therefore, 
\begin{equation}
\Pr(O \_ DC_{i}^{0})  = \Pr(O \_ DC_{i}^{\delta}).
\end{equation} 
And hence, an application of Lemma \ref{lem:prdcDprobv1} yields the result.  
\end{proof}

\begin{lemma}
For a $GVG(\Omega,\mathcal{F},\Pp, \Ww)$ and a player $i$,

\[
\Pr(O \_ IC_{i}^{0} | \{x_{i}^{O_{\mathrm{min}}}\})  =  \Pr(O| \{x_{i}^{O_{\mathrm{max}}}\}) - \Pr(O | \{x_{i}^{O_{\mathrm{min}}}\}).
\]

\label{lem:pric0probv1}
\end{lemma}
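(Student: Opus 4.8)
The plan is to mirror the proofs of Lemmas \ref{lem:prdcDprobv1} and \ref{lem:pricDprobv1}: first pin down the indicator function for the set $O \_ IC_{i}^{0}$, then write it as a difference of two copies of $\Ii^{O}$, and finally integrate, treating the conditioning on $\{x_{i}^{O_{\mathrm{min}}}\}$ by restricting to the marginal space $\Omega^{N \setminus \{i\}}$ and invoking the product structure of $\Pp$.

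First I would unwind the definitions. By Definitions \ref{def:incc}, \ref{def:crit0} and \ref{def:imaxmin}, an event is increasingly critical zero exactly when player $i$ is casting its minimal support $\{x_{i}^{O_{\mathrm{min}}}\}$, the outcome under that vote is not $O$, and raising the vote to $\{x_{i}^{O_{\mathrm{max}}}\}$ produces $O$. The reason the decisive ``increased'' vote can be taken to be $\{x_{i}^{O_{\mathrm{max}}}\}$ is Definition \ref{def:imaxmin}: since $\Ii^{O}(\omega^{N \setminus \{i\}} \times \{x_{i}^{O_{\mathrm{max}}}\}) \geq \Ii^{O}(\omega^{N \setminus \{i\}} \times x_{i})$ for every $x_{i}$, if no vote attains $O$ then neither does the maximal one, and conversely. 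Hence, on the subspace where player $i$ votes $\{x_{i}^{O_{\mathrm{min}}}\}$, the indicator is
\[
\Ii^{O \_ IC_{i}^{0}} = \Ii^{O}(\omega^{N \setminus \{i\}} \times \{x_{i}^{O_{\mathrm{max}}}\}) - \Ii^{O}(\omega^{N \setminus \{i\}} \times \{x_{i}^{O_{\mathrm{min}}}\}),
\]
and the same ordering property of Definition \ref{def:imaxmin} guarantees that this difference never equals $-1$, so it is a legitimate $\{0,1\}$-valued indicator.

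Next I would compute the conditional probability by integrating this indicator over $\Omega^{N \setminus \{i\}}$ against the marginal measure $\lambda_{x_{i}^{O_{\mathrm{min}}}}$; conditioning on $\{x_{i}^{O_{\mathrm{min}}}\}$ collapses the $\Xx_{i}$ coordinate to a point, leaving precisely the events in which every $IC^{0}$ event lives. Splitting the integral across the difference produces two terms. The second term is immediately $\Pr(O \, | \, \{x_{i}^{O_{\mathrm{min}}}\})$, exactly as the term $\mathrm{I}2$ was identified in Lemma \ref{lem:prdcDprobv1}. For the first term the integrand evaluates the outcome with player $i$ at $\{x_{i}^{O_{\mathrm{max}}}\}$ while the integration measure is $\lambda_{x_{i}^{O_{\mathrm{min}}}}$; here I would invoke the product construction of $(\Omega,\Ff,\Pp)$, just as in the earlier lemmas, to note that the marginal measure on $\Omega^{N \setminus \{i\}}$ does not depend on player $i$'s vote, so that $\lambda_{x_{i}^{O_{\mathrm{min}}}} = \lambda_{x_{i}^{O_{\mathrm{max}}}}$. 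Rewriting the first term with $\lambda_{x_{i}^{O_{\mathrm{max}}}}$ then identifies it as $\Pr(O \, | \, \{x_{i}^{O_{\mathrm{max}}}\})$, and combining the two terms yields the claim.

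The step I expect to be the crux is the handling of that first integral, where the indicator is evaluated at the maximal vote but the ambient measure is the one conditioned on the minimal vote; the whole argument hinges on recognising that the marginal on $\Omega^{N \setminus \{i\}}$ is blind to player $i$'s choice, so the subscript may be switched freely. This is the same product-measure fact used to pass from $\lambda$ to $\lambda_{x_{i}^{O_{\mathrm{max}}}}$ in Lemma \ref{lem:pricDprobv1}; the only genuinely new bookkeeping is that the ambient measure is already the conditional $\lambda_{x_{i}^{O_{\mathrm{min}}}}$ rather than the unconditioned $\lambda$, which is why both $\Pr(O)$ terms of the $\delta$ case are replaced here by conditional probabilities. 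As a consistency check, the resulting right-hand side coincides with $\Pr(O \_ TC_{i}^{\delta})$ from Corollary \ref{cor:prtcDprobv1}, which is reassuring.
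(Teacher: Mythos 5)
Your proposal is correct and follows essentially the same route as the paper's own proof: the same decomposition of $\Ii^{O \_ IC_{i}^{0}}$ into a difference of two copies of $\Ii^{O}$ (with the conditioning on $\{x_{i}^{O_{\mathrm{min}}}\}$ handled by integrating against $\lambda_{x_{i}^{O_{\mathrm{min}}}}$ over $\Omega^{N \setminus \{i\}}$), the same split into two integrals, and the same product-measure identification $\lambda_{x_{i}^{O_{\mathrm{min}}}} = \lambda_{x_{i}^{O_{\mathrm{max}}}}$ to read off the first term as $\Pr(O \, | \, \{x_{i}^{O_{\mathrm{max}}}\})$. Your added remark that Definition \ref{def:imaxmin} forces the difference of indicators to lie in $\{0,1\}$ is a small justification the paper leaves implicit, but it does not change the argument.
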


\begin{proof}
Let $\omega^{N \setminus \{i\}} = \omega^{N} \setminus x_{i}$, then by Definitions \ref{def:incc}, \ref{def:crit0} and \ref{def:imaxmin}, the indicator function $\Ii^{O \_ IC_{i}^{0}} : \Omega^{N} \rightarrow \{0,1\}$ for the set $O \_ IC_{i}^{0}$ is given by,

\[ \Ii^{O \_ IC_{i}^{0}}(\omega^{N}) = \left\{ \begin{array}{ll}
						1 & \mbox{if $\{x_{i}\} = \{x_{i}^{O_{\mathrm{min}}}\}$, $\Ww(\omega^{N}) \neq O$,}
 \\ & \mbox{and $\Ww(\omega^{N \setminus \{i\}} \times \{x_{i}^{O_{\mathrm{max}}} \}) = O$}; \\
						0 & \mbox{otherwise}.
					\end{array}
			\right. \]

Let $\Ii^{\{ x_{i}^{O_{\mathrm{min}}} \} }$ be the indicator function for $\{x_{i}\} = \{x_{i}^{O_{\mathrm{min}}}\}$, and let $\Ii^{O}$ be the indicator function for an event  $\omega^{N}$ being classified as outcome $O$, then the indicator function $\Ii^{O \_ IC_{i}^{0}}$ can be expressed as, 

\begin{equation}
\Ii^{O \_ IC_{i}^{0}}(\omega^{N}) = \Ii^{ \{ x_{i}^{O_{\mathrm{min}}} \} } \left( \Ii^{O}( \omega^{N \setminus \{i\}} \times \{x_{i}^{O_{\mathrm{max}}} \}) - \Ii^{O}(\omega^{N \setminus \{i\}} \times \{x_{i}^{O_{\mathrm{min}}}\}) \right).
\label{eq:ic0pre1}
\end{equation}

Integrating Equation (\ref{eq:ic0pre1}) with respect to the sigma finite marginal measure $\lambda_{x_{i}^{O_{\mathrm{min}}}}$ produces the conditional expectation of the random variable $\Ee \Ii^{O \_ IC_{i}^{0}}$ given player $i$ has expressed $\{x_{i}^{O_{\mathrm{min}}}\}$, which we interpret as $\Pr(O \_ IC_{i}^{0} | \{x_{i}^{O_{\mathrm{min}}}\})$.  As an added bonus, it also allows the removal of the $\Ii^{ \{ x_{i}^{O_{\mathrm{min}}} \}}$ indicator function.  

\begin{equation}
\Pr(O \_ IC_{i}^{0} | \{x_{i}^{O_{\mathrm{min}}}\}) = \! \! \! \int_{\omega^{N \setminus \{i\}} \in \Omega^{N \setminus \{i\}}} \! \! \! \! \! \! \! \! \! \! \! \! \! \! \! \! \! \! \! \! \! \! \! \! \! \! \! \! \Ii^{O}( \omega^{N \setminus \{i\}} \times \{x_{i}^{O_{\mathrm{max}}} \}) - \Ii^{O}(\omega^{N \setminus \{i\}} \times \{x_{i}^{O_{\mathrm{min}}}\}) \lambda_{x_{i}^{O_{\mathrm{min}}}} (d\omega^{N \setminus \{i\}}).
\label{eq:ic0pre2}
\end{equation}

The integral in Equation (\ref{eq:ic0pre2}) can be split to give,

\begin{eqnarray}
\Pr(O \_ IC_{i}^{0} | \{x_{i}^{O_{\mathrm{min}}}\}) & = &    \int_{\omega^{N \setminus \{i\}} \in \Omega^{N \setminus \{i\}}}  \Ii^{O}( \omega^{N \setminus \{i\}} \times \{x_{i}^{O_{\mathrm{max}}} \}) \; \; \lambda_{x_{i}^{O_{\mathrm{min}}}} (d\omega^{N \setminus \{i\}}), \nonumber \\
 & - & \int_{\omega^{N \setminus \{i\}} \in \Omega^{N \setminus \{i\}}} \Ii^{O}(\omega^{N \setminus \{i\}}  \times \{x_{i}^{O_{\mathrm{min}}}\}) \; \; \lambda_{x_{i}^{O_{\mathrm{min}}}} (d\omega^{N \setminus \{i\}}). \nonumber 
\end{eqnarray}
Hence,
\begin{equation}
\Pr(O \_ IC_{i}^{0} | \{x_{i}^{O_{\mathrm{min}}}\})  =  \mathrm{I}1 - \mathrm{I}2. \label{eq:ic0I2v2}
\end{equation}

$\mathrm{I}2$ is the expectation of the random variable $\Ee \Ii^{O}$, given player $i$ has expressed $\{x_{i}^{O_{\mathrm{min}}}\}$.  Thus,

\begin{equation}
\mathrm{I}2 = \Pr(O | \{x_{i}^{O_{\mathrm{min}}}\}).
\label{eq:ic0I2bv2}
\end{equation}

The construction of the product space $(\Omega,\mathcal{F},\Pp)$ ensures,

\[
\lambda_{x_{i}^{O_{\mathrm{min}}}} (d\omega^{N \setminus \{i\}}) = \lambda_{x_{i}^{O_{\mathrm{max}}}} (d\omega^{N \setminus \{i\}}).
\]

Therefore, $\mathrm{I}1$ can be expressed as,

\[
\mathrm{I}1 =  \int_{\omega^{N \setminus \{i\}} \in \Omega^{N \setminus \{i\}}} \Ii^{O}(\omega^{N \setminus \{i\}} \times \{x_{i}^{O_{\mathrm{max}}}\}) \; \; \lambda_{x_{i}^{O_{\mathrm{max}}}} (d\omega^{N}). 
\]

$\mathrm{I}1$ is the expectation of the random variable $\Ee \Ii^{O}$, given player $i$ has expressed $\{x_{i}^{O_{\mathrm{max}}}\}$.  Hence,

\begin{equation}
\mathrm{I}1 = \Pr(O | \{x_{i}^{O_{\mathrm{max}}}\}).
\label{eq:ic0I2cv2}
\end{equation}

Substituting Equations (\ref{eq:ic0I2bv2}) and (\ref{eq:ic0I2cv2}) in Equation (\ref{eq:ic0I2v2}) yields the result.  
\end{proof}

\begin{lemma}
For a $GVG(\Omega,\mathcal{F},\Pp, \Ww)$ and a player $i$,

\[
\Pr(O \_ IC_{i}^{0})  = \Pr(\{x_{i}^{O_{\mathrm{min}}}\}) \times \left( \Pr(O| \{x_{i}^{O_{\mathrm{max}}}\}) - \Pr(O | \{x_{i}^{O_{\mathrm{min}}}\}) \right).
\]

\label{lem:pric0prob}
\end{lemma}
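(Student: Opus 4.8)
The plan is to obtain the unconditional probability $\Pr(O \_ IC_{i}^{0})$ from the conditional probability already computed in Lemma~\ref{lem:pric0probv1} by multiplying through by the probability of the conditioning event. The crucial observation is that the increasing criticality zero event is, by its very construction, contained entirely within the event that player $i$ expresses $\{x_{i}^{O_{\mathrm{min}}}\}$: the indicator $\Ii^{O \_ IC_{i}^{0}}$ displayed in the proof of Lemma~\ref{lem:pric0probv1} can equal $1$ only when $\{x_{i}\} = \{x_{i}^{O_{\mathrm{min}}}\}$. Hence $O \_ IC_{i}^{0} \subseteq \{x_{i}^{O_{\mathrm{min}}}\}$, so that the intersection $O \_ IC_{i}^{0} \cap \{x_{i}^{O_{\mathrm{min}}}\}$ equals $O \_ IC_{i}^{0}$ itself.

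First I would invoke the definition of conditional probability in the form $\Pr(A \cap B) = \Pr(A \mid B)\,\Pr(B)$. Applying this with $A = O \_ IC_{i}^{0}$ and $B = \{x_{i}^{O_{\mathrm{min}}}\}$, and using the containment just noted to replace $\Pr(A \cap B)$ by $\Pr(A)$, gives
\[
\Pr(O \_ IC_{i}^{0}) = \Pr(\{x_{i}^{O_{\mathrm{min}}}\}) \times \Pr(O \_ IC_{i}^{0} \mid \{x_{i}^{O_{\mathrm{min}}}\}).
\]
Substituting the expression for $\Pr(O \_ IC_{i}^{0} \mid \{x_{i}^{O_{\mathrm{min}}}\})$ from Lemma~\ref{lem:pric0probv1} then yields the stated identity immediately.

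Alternatively, I could reprove the result from scratch in the integral style of the preceding lemmas: integrate the indicator of Equation~(\ref{eq:ic0pre1}) against $\Pp$ over all of $\Omega^{N}$, apply the Fubini decomposition of Lemma~\ref{lem:multiint-i}, pull the $x_{i}$-constant bracket $\Ii^{O}(\omega^{N \setminus \{i\}} \times \{x_{i}^{O_{\mathrm{max}}}\}) - \Ii^{O}(\omega^{N \setminus \{i\}} \times \{x_{i}^{O_{\mathrm{min}}}\})$ outside the inner integral, and evaluate the remaining inner integral $\int_{x_{i} \in \Xx_{i}} \Ii^{\{x_{i}^{O_{\mathrm{min}}}\}}\,\mu_{\omega^{N \setminus \{i\}}}(dx_{i})$ as $\Pr(\{x_{i}^{O_{\mathrm{min}}}\})$. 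The two outer integrals of the bracketed indicators then evaluate to $\Pr(O \mid \{x_{i}^{O_{\mathrm{max}}}\})$ and $\Pr(O \mid \{x_{i}^{O_{\mathrm{min}}}\})$ exactly as in the earlier proofs.

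The only real subtlety, and the step I would be most careful about, is the factorisation in the direct approach: pulling $\Pr(\{x_{i}^{O_{\mathrm{min}}}\})$ out as a constant relies on the product-space structure of the $GVG$, which guarantees that the marginal law of player $i$ is independent of $\omega^{N \setminus \{i\}}$, so that $\int_{x_{i} \in \Xx_{i}} \Ii^{\{x_{i}^{O_{\mathrm{min}}}\}}\,\mu_{\omega^{N \setminus \{i\}}}(dx_{i})$ does not depend on $\omega^{N \setminus \{i\}}$. In the conditional-probability approach this same fact is what makes Lemma~\ref{lem:pric0probv1} combine cleanly, so either way the product structure is doing the essential work; everything else is bookkeeping.
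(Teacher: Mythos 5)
Your proposal is correct and takes essentially the same route as the paper: the paper likewise multiplies the conditional probability of Lemma~\ref{lem:pric0probv1} by $\Pr(\{x_{i}^{O_{\mathrm{min}}}\})$ and uses the containment $O \_ IC_{i}^{0} \subseteq \{x_{i}^{O_{\mathrm{min}}}\}$ (so that $\Pr(\{x_{i}^{O_{\mathrm{min}}}\} \cap O \_ IC_{i}^{0}) = \Pr(O \_ IC_{i}^{0})$), which is exactly your first argument. Your alternative integral derivation is not needed, but it is consistent with the style of the earlier lemmas and would also work.
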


\begin{proof}
Taking Lemma \ref{lem:pric0probv1} and multiplying by $\Pr(\{x_{i}^{O_{\mathrm{min}}}\})$ gives,

\begin{eqnarray*}
\Pr(\{x_{i}^{O_{\mathrm{min}}}\}) \times \Pr(O \_ IC_{i}^{0}) & = & \Pr(\{x_{i}^{O_{\mathrm{min}}}\}) \times \left( \Pr(O| \{x_{i}^{O_{\mathrm{max}}}\}) - \Pr(O | \{x_{i}^{O_{\mathrm{min}}}\}) \right).\\
\Pr(\{x_{i}^{O_{\mathrm{min}}}\} \cap O \_ IC_{i}^{0}) & = & \Pr(\{x_{i}^{O_{\mathrm{min}}}\}) \times \left( \Pr(O| \{x_{i}^{O_{\mathrm{max}}}\}) - \Pr(O | \{x_{i}^{O_{\mathrm{min}}}\}) \right).
\end{eqnarray*}

By Definitions \ref{def:incc}, \ref{def:crit0}, and \ref{def:imaxmin} $(O \_ IC_{i}^{0}) = (\{x_{i}^{O_{\mathrm{min}}}\} \cap O \_ IC_{i}^{0})$, which yields the result.

\end{proof}

\begin{corol}
For a $GVG(\Omega,\mathcal{F},\Pp, \Ww)$ and a player $i$, 

\begin{eqnarray*}
\Pr(O \_ TC_{i}^{0}) & = & \Pr(O) - \Pr(O | \{x_{i}^{O_{\mathrm{min}}}\}) + \\
&  & \Pr(\{x_{i}^{O_{\mathrm{min}}}\}) \times \left( \Pr(O| \{x_{i}^{O_{\mathrm{max}}}\}) - \Pr(O | \{x_{i}^{O_{\mathrm{min}}}\}) \right).
\end{eqnarray*}

\label{cor:prtc0probv1}
\end{corol}

\begin{proof}
By Lemma \ref{lem:prdc0probv1},

\begin{equation}
\Pr(O \_ DC_{i}^{0})  = \Pr(O) - \Pr(O | \{x_{i}^{O_{\mathrm{min}}}\}). \label{eq:tc0eq1v1}
\end{equation}

By Lemma \ref{lem:pric0prob},

\begin{equation}
\Pr(O \_ IC_{i}^{0})  = \Pr(\{x_{i}^{O_{\mathrm{min}}}\}) \times \left( \Pr(O| \{x_{i}^{O_{\mathrm{max}}}\}) - \Pr(O | \{x_{i}^{O_{\mathrm{min}}}\}) \right).
\label{eq:tc0eq2v1}
\end{equation}

By Definition \ref{def:totc},

\begin{equation}
\Pr(O \_ TC_{i}^{0}) = \Pr(O \_ DC_{i}^{0}) + \Pr(O \_ IC_{i}^{0}). \label{eq:tc0eq3v1}
\end{equation}

Substituting Equations (\ref{eq:tc0eq1v1}) and (\ref{eq:tc0eq2v1}) in Equation (\ref{eq:tc0eq3v1}) yields the result.  
\end{proof}

\subsection{Summary}

Using measure theory, this section has shown that the criticality based voting power measures reduce to a simple expression, involving at most three, or four, simple probabilities: $\Pr(O)$, $\Pr(O| \{x_{i}^{O_{\mathrm{max}}}\})$, $\Pr(O | \{x_{i}^{O_{\mathrm{min}}}\})$, and $\Pr(\{x_{i}^{O_{\mathrm{min}}}\})$.  This result has been produced for the most general type of voting game possible, with absolutely no restriction on how the game is constructed.  The voting actions of a player can include a selection from an infinite set, or it can be as simple as voting ``yes'' or ``no''.  The decision rule of the game can range from a simple majority, to the most complex non-monotone rule you can imagine.

To the best of our knowledge, this represents the first time that voting power analysis has been given such a fundamental and mathematically justified basis.  We hope that these insights will inspire a renewed vigour in voting power analysis, along the lines of the renaissance in artificial neural networks, sparked by a similarly mathematical justification.

\section{The Standard Techniques Revisited}
\label{sect:vptr}

In Section \ref{sect:vpt} we started the process of understanding the standard techniques by showing how they can be expressed in terms of criticality functions.  In the previous section, we analysed these criticality functions in the case of our generalisation of a voting game.  In this section, we bring this to a logical conclusion by providing a description of these techniques in the same probabilistic terms.  

\begin{theory}
For a $GVG(\Omega,\mathcal{F},\Pp, \Ww)$ and a player $i$, the standard voting power techniques are calculating,

\begin{eqnarray*}
\mathrm{ShapleyShubik} & = & \Pr(\{x_{i}^{O_{\mathrm{min}}}\}) \times \left( \Pr(O| \{x_{i}^{O_{\mathrm{max}}}\}) - \Pr(O | \{x_{i}^{O_{\mathrm{min}}}\}) \right). \\
\mathrm{Banzhaf} & = & \Pr(O | \{x_{i}^{O_{\mathrm{max}}}\})- \Pr(O | \{x_{i}^{O_{\mathrm{min}}}\}). \\
\mathrm{Straffin} & = &  \Pr(O | \{x_{i}^{O_{\mathrm{max}}}\})- \Pr(O | \{x_{i}^{O_{\mathrm{min}}}\}). \\
\mathrm{Johnston} & = &  \Pr(O) - \Pr(O | \{x_{i}^{O_{\mathrm{min}}}\}). \\
\mathrm{Coleman \; Initiate \; Action} & = & \frac{\Pr(O | \{x_{i}^{O_{\mathrm{max}}}\}) - \Pr(O)}{1 - \Pr(O)}.\\
\mathrm{Coleman \; Prevent \; Action} & = &  \frac{\Pr(O) - \Pr(O | \{x_{i}^{O_{\mathrm{min}}}\})}{\Pr(O)}.
\end{eqnarray*}
\label{theory:vottech}
\end{theory}

\begin{proof}
In Section \ref{sect:vpt} it was shown that the standard techniques are calculating the probability of a criticality set.  Taking the results from that section, and rephrasing them in the generalised context gives,

\begin{eqnarray}
\mathrm{ShapleyShubik} & = & \Pr(O \_ IC^{0}). \label{eq:ssvpt} \\
\mathrm{Banzhaf} & = & \Pr(O \_ TC^{\delta}). \label{eq:bzvpt} \\
\mathrm{Straffin} & = &  \Pr(O \_ TC^{\delta}). \label{eq:stvpt} \\
\mathrm{Johnston} & = &  \Pr(O \_ DC^{0}). \label{eq:jvpt} \\
\mathrm{Coleman \; Initiate \; Action} & = & \Pr(O \_ IC^{\delta} \; | \; \overline{O} \;).  \label{eq:cia1vpt} \\
\mathrm{Coleman \; Prevent \; Action} & = &  \Pr(O \_ DC^{\delta} \; | \; O \;). \label{eq:cpa1vpt}
\end{eqnarray}

Let's examine the Coleman measures, and make them a little easier to handle.  By the standard rules of probability, 

\begin{eqnarray}
\mathrm{Coleman \; Initiate \; Action} & = & \Pr(O \_ IC^{\delta} \; | \; \overline{O} \;) = \frac{\Pr(O \_ IC^{\delta} \cap \overline{O})}{\Pr(\overline{O})}. \label{eq:cia2vpt} \\
\mathrm{Coleman \; Prevent \; Action} & = &  \Pr(O \_ DC^{\delta} \; | \; O \;) = \frac{\Pr(O \_ DC^{\delta} \cap O)}{\Pr(O)}. \label{eq:cpa2vpt}
\end{eqnarray}

By Definitions \ref{def:incc} and \ref{def:decc} 
\[(O \_ DC^{\delta}) = (O \_ DC^{\delta} \cap O),\]
and 
\[(O \_ IC^{\delta}) = (O \_ IC^{\delta} \cap  \overline{O}).\]
Replacing these terms in Equations (\ref{eq:cia2vpt}) and (\ref{eq:cpa2vpt}), and using \mbox{$\Pr(\overline{O}) = 1 - \Pr(O)$} gives,

\begin{eqnarray}
\mathrm{Coleman \; Initiate \; Action} & = & \frac{\Pr(O \_ IC^{\delta} \cap \overline{O})}{\Pr(\overline{O})} \label{eq:cia3vpt} \\ 
&= & \frac{\Pr(O \_ IC^{\delta})} {\Pr(\overline{O})} = \frac{\Pr(O \_ IC^{\delta})} {1 - \Pr(O)}. \nonumber\\
\mathrm{Coleman \; Prevent \; Action} & = &  \frac{\Pr(O \_ DC^{\delta} \cap O)}{\Pr(O)} = \frac{\Pr(O \_ DC^{\delta})} {\Pr(O)}. \label{eq:cpa3vpt}
\end{eqnarray}

The proof is completed by using Corollary \ref{cor:prtcDprobv1} and Lemmas \ref{lem:prdcDprobv1}, \ref{lem:pricDprobv1}, \ref{lem:prdc0probv1}, and \ref{lem:pric0prob},  to replace terms in Equations (\ref{eq:ssvpt}), (\ref{eq:bzvpt}), (\ref{eq:stvpt}), (\ref{eq:jvpt}), (\ref{eq:cia3vpt}), and (\ref{eq:cpa3vpt}).  
\end{proof}

Let's examine this result in more familiar terms.  The standard techniques are most commonly used in voting games which can result in either a winning or losing outcome.  The previous theorem tells us that the standard techniques are in fact calculating the following,

\begin{eqnarray*}
\mathrm{ShapleyShubik} & = & \Pr(\mbox{Player } i \mbox{ votes no}) \\
&& \times \space \left( \Pr(\mbox{Winning}| \mbox{Player } i \mbox{ votes yes}) - \Pr(\mbox{Winning}| \mbox{Player } i \mbox{ votes no})  \right). \\
\mathrm{Banzhaf} & = & \Pr(\mbox{Winning}| \mbox{Player } i \mbox{ votes yes}) - \Pr(\mbox{Winning}| \mbox{Player } i \mbox{ votes no}) . \\
\mathrm{Straffin} & = &  \Pr(\mbox{Winning}| \mbox{Player } i \mbox{ votes yes}) - \Pr(\mbox{Winning}| \mbox{Player } i \mbox{ votes no}) . \\
\mathrm{Johnston} & = &  \Pr(\mbox{Winning}) - \Pr(\mbox{Winning}| \mbox{Player } i \mbox{ votes no}) .
\end{eqnarray*}
and 
\begin{eqnarray*}
\mathrm{Coleman \; Initiate \; Action} & = & \frac{\Pr(\mbox{Winning} |  \mbox{Player } i \mbox{ votes yes}) - \Pr(\mbox{Winning})}{1-  \Pr(\mbox{Winning})}.\\
\mathrm{Coleman \; Prevent \; Action} & = &  \frac{\Pr(\mbox{Winning}) - \Pr(\mbox{Winning} | \mbox{Player } i \mbox{ votes no})}{\Pr(\mbox{Winning})}.
\end{eqnarray*}

As an unexpected bonus of our general framework the above results apply irrespective of the number of ways in which the player can vote.  Whether it be a simple ``yes/no'' game, a game with abstentions, or a choice from a continuum of approval values.  Moreover, all these techniques are expressed in terms of three (or four) simple probabilities.   This makes them easy to comprehend and transparent.  But perhaps, the most important consequence, is being able to express these techniques independent of a probability model.  A huge problem with the standard techniques is their requirement for a specific probability model, making it impractical to use them in a game with a different probability model.  A drawback not faced by our measure theoretic reinterpretation.

There is one final point to note. The results of this theorem contradict the work of Straffin.  He long argued that the Shapley-Shubik index and the Banzhaf measure were calculating the same thing, albeit with different probability models.  He based his ideas upon the fact that his Homogeneity Assumption measure was numerically equivalent to the Shapley-Shubik index.  Which is true, but only for simple ``yes/no'' games without abstentions.  A more complex game, even something as simple as allowing a few players to abstain, will break this numerical equivalency, and show that the Homogeneity Assumption measure is not the same as the Shapley-Shubik index. We can see this quite easily with an example.  Imagine a game where the number of possible player actions is large, perhaps even infinite.  In such a game, it is reasonable to assume that $\Pr(\{x_{i}^{O_{\mathrm{min}}}\}) \rightarrow 0$, making the Shapley-Shubik index tend to zero as well.  But, both the Banzhaf measure, and the Straffin indices, will not tend to zero (unless, of course, the player has no influence on the outcome).  Therefore, the Shapley-Shubik index is inequivalent to the Banzhaf measure and the Straffin indices.

\section{Discussion}

We have produced a generalised description of criticality using measure theory.  This work has brought together all the known voting power techniques under one unifying framework.  Our mathematical framework allows voting power to be calculated in both simple and complex games, from basic ``yes/no'' voting, to voting with abstentions, and even abstract non-monotone voting games with multiple outcomes.  Furthermore, our measures are constructed independent of the underlying probability distribution of the players.  This allows the use of the correct probability model for the game in question, instead of the model imposed arbitrarily by a voting power technique.  

A huge obstacle currently faced by voting power theorists is persuading other researchers, politicians, and the general public, how their chosen standard technique measures voting power better than any other technique. However, in this work, we have been able to show mathematically that these techniques are calculating just three, or four, standard probabilities.  Thus, any debate about the superiority of one index over another is redundant.  In lieu of analysing voting power using the standard techniques, we advocate the adoption of these probabilities.  Probabilities are intuitive notions of influence, and are widely used in every day life. Expressing voter influence in terms of probability should lead to greater acceptance within society.  
 
The intuitive nature of these results have an additional benefit. It makes it easier to explain these ideas to a wider audience.  The difficult task of trying to explain to a politician what a Banzhaf measure is, or what Total Criticality $\delta$ means, can be replaced with the easier to understand common sense approach to measuring total influence.  For instance, a hypothetical conversation with a politician might go something like this:

``Imagine, that you vote against a particular motion, then there is a 30\% chance of it passing.  However, should you choose vote in favour, the chance of it passing goes up to 44\%.  Therefore, your total voting power in this game is 44\% - 30\% = 14\%.''  

Explained this way, voting power is both obvious and easy for a layperson to understand.  And it should make it easier for institutions to adopt voting procedures that respect the notions of a fair distribution of voting power.  With more and more people better able to understand voting power analysis, it will become easier to create democratically fairer institutions, and ultimately improve democracy itself.

\section{Conclusion}

Everyone is subject to the decisions made, or not made, by voting games.  Whether it is a decision to act on climate change by the United Nations, or a decision to collect your waste fortnightly by the local council.  Arguably, they are one of the most influential types of game studied by game theorists.

The need to create players with unequal power in large democratic institutions is well established (for example, the E.U. Council of Ministers has a voting game where the players represent populations of unequal size).  However, up until now, there has not been a widely accepted, mathematically justifiable, method for measuring the power of a player in a voting game - a situation which has impeded this research field to have its proposals adopted wholeheartedly by large democratic institutions.  

We have shown that all voting power measures based on the concept of criticality can be reduced to a simple set of probabilistic expressions.    Analysing voting games using just these simple probabilities brings important advantages over the status quo of the standard techniques.  First, it allows the results to be presented to, and understood by, a much wider range of people.  Second, it ends the debate over the superiority of one technique over another.  And third, these probabilities can be calculated for any voting game, irrespective of probability model (unlike the standard techniques).

Crucially, our results apply to all possible voting games; from the simple ``yes/no'' voting games to the abstract ones with non-monotone decision rules and multiple candidates.  As such, we feel that our paper provides the necessary mathematical tools to help build better democracies.

%


%

\appendix

\section{The Deegan-Packel, and Holler Public Good Indices}
\label{appendix:dphp}

All the results given in this paper so far have been applicable to any type of game.  Unfortunately, the Deegan-Packel, and the Holler Public Good indices rely upon a concept called a minimum winning coalition (a minimum winning event).  These indices can only be applied to games with monotonic decision rules.  As this restricts the type of games for which they can be applied, a discussion of these indices has been put off up until now.    

While the upcoming results can be understood without an understanding of monotonic decision rules, for completeness, they will be briefly explained.  

\subsection{Monotonic Decision Rules}

A monotonic decision rule induces an order upon the elements of $\Xx_{i}$, such that,

\[
\mathrm{if \;} \Ww(w^{N \setminus \{i\}} \times x_{i}) = O, \mathrm{\; then \;} \Ww(w^{N \setminus \{i\}} \times x'_{i}) = O, \mathrm{\; for \; all \;} x'_{i} \stackrel{O}{\geq} x_{i}.
\]

This order allows us to define a minimum winning event as an event which is classified as outcome $O$, but is no longer classified as $O$ when any of the players replace their current action with an action immediately below, with respect to the order induced upon them.  

If you think about this is terms of a simple ``yes/no'' game, a minimum winning coalition is a winning coalition in which every redundant player is removed; to leave only those necessary for the coalition to remain winning.

\subsection{The Indices}

The~\cite{DeeganPackel1978}, and~\cite{Holler1982} indices are incredibly similar.  Both indices can be described by the following.

\begin{enumerate}
\item Examine every minimum winning event.
\item Identify if it is decreasingly critical delta for player $i$.
\item If so, add $1$ (for Holler), or a fraction of $1$ (for Deegan-Packel) to a running count for player $i$.
\item Repeat until all minimum winning events have been examined.
\end{enumerate} 

In the Deegan-Packel index, the fraction that is added is a function of $\omega^{N}$ only, hence it can be absorbed within the $\Pr(d\omega)$ function of the game.  In other words, both the Deegan-Packel and Holler Public Good indices are the same, albeit with slightly different probability models. The actual fraction that is added in the Deegan-Packel index is inversely proportional to the number of players that express non-zero support in $\omega^{N}$.  Hence, the probability model of the Deegan-Packel index implies that events with more players expressing non-zero support are less likely to occur.  

Focusing upon the Deegan-Packel index, we note, from their paper, that their probability model assumes that only minimum winning events ($MWE$) will form.  Ergo, $\Pr(d\omega) = 0$ unless $\omega \in MWE$.  This allows the required integration to be carried out over all possible $\omega$ (as the probability model will ensure that only $MWE$s occur).

\[
\mathrm{DeeganPackel} = \int_{\omega \in \Omega } \; \; \Ii^{O \_ DC^{\delta}}(\omega) \; \;  \Pr(d\omega) \; = \; \Pr(O \_ DC^{\delta}).
\]

But, from Lemma \ref{lem:prdcDprobv1} we already know that

\[
\Pr(O \_ DC^{\delta}) = \Pr(O) - \Pr(O | \{x_{i}^{O_{\mathrm{min}}}\}).
\]

Hence, these indices can be expressed with the same probabilities we previously suggested.

\bibliographystyle{plain}

\bibliography{cond_prob_critique}

\end{document}